 \newtheorem{thm}{Theorem}[section]
 \newtheorem{cor}[thm]{Corollary}
 \newtheorem{lem}[thm]{Lemma}
 \theoremstyle{definition}
 \theoremstyle{remark}
 \newtheorem{rem}[thm]{Remark}
 \numberwithin{equation}{section}
\begin{document}

%-------------------------------------------------------------------------
% editorial commands: to be inserted by the editorial office
%
%\firstpage{1} \volume{228} \Copyrightyear{2004} \DOI{003-0001}
%
%
%\seriesextra{Just an add-on}
%\seriesextraline{This is the Concrete Title of this Book\br H.E. R and S.T.C. W, Eds.}
%
% for journals:
%
%\firstpage{1}
%\issuenumber{1}
%\Volumeandyear{1 (2004)}
%\Copyrightyear{2004}
%\DOI{003-xxxx-y}
%\Signet
%\commby{inhouse}
%\submitted{March 14, 2003}
%\received{March 16, 2000}
%\revised{June 1, 2000}
%\accepted{July 22, 2000}
%
%
%
%---------------------------------------------------------------------------
%Insert here the title, affiliations and abstract:
%

\title[Combinatorial identities associated with new families of the numbers..]
 {Combinatorial identities associated with new families of the numbers and polynomials and their approximation values}

%----------Author 1
\author[I. Kucukoglu]{Irem Kucukoglu}

\address{%
Department of Software Engineering,\\
Faculty of Engineering and Architecture,\\
Antalya Akev University,\\
Antalya, 07525, Turkey}

\email{i.kucukoglu@akev.edu.tr}

%\thanks{This work was completed with the support of our \TeX-pert.}
%----------Author 2
\author[Y. Simsek]{Yilmaz Simsek}
\address{Department of Mathematics,\\
	Faculty of Science University of Akdeniz,\\
	Antalya, 07058, Turkey}
\email{ysimsek@akdeniz.edu.tr}
%----------classification, keywords, date
\subjclass{05A10; 05A15; 11B37; 11B68; 11B83; 11J68; 26C05.}

\keywords{Generating
	functions, Stirling's approximation formula, Stirling
	numbers of the first kind, Apostol--Bernoulli numbers, Apostol--Euler numbers, Catalan numbers, Combinatorial sums, Binomial coefficients, Vandermonde
	convolution formula, Partial differential equations.}

\date{October 13, 2017}
%----------additions
%\dedicatory{To my boss}
%%% ----------------------------------------------------------------------

\begin{abstract}
Recently, the numbers $Y_{n}(\lambda )$ and the polynomials $Y_{n}(x,\lambda
)$ have been introduced by the second author \cite{simsekTJM}. The purpose
of this paper is to construct higher-order of these numbers and polynomials
with their generating functions. By using these generating functions with
their functional equations and derivative equations, we derive various
identities and relations including two recurrence relations, Vandermonde
type convolution formula, combinatorial sums, the Bernstein basis functions, and also some well known
families of special numbers and their interpolation functions such as the
Apostol--Bernoulli numbers, the Apostol--Euler numbers, the Stirling numbers
of the first kind, and the zeta type function. Finally, by using Stirling's
approximation for factorials, we investigate some approximation values of the
special case of the numbers $Y_{n}\left( \lambda \right) $.
\end{abstract}

%%% ----------------------------------------------------------------------
\maketitle
%%% ----------------------------------------------------------------------

\section{Introduction}

The first motivation of the present paper is to define higher-order version
of the numbers $Y_{n}\left( \lambda \right) $ and polynomials $Y_{n}\left(
x;\lambda \right) $ with their generating functions which are recently
constructed by the second author \cite{simsekTJM}. The second motivation is
to derive some functional and derivative equaitons for providing new
relations and identities including these numbers and polynomials,
Vandermonde type convolution formulas, the Apostol-type numbers, the
Stirling numbers of the first kind, the Bernstein basis functions, and also
combinatorial sums which are of many applications in not only mathematics, but also aother science (\textit{cf}. \cite{Drajovic}, \cite{EgorychevSJM}, \cite{Gould}, \cite{KoshyBOOK}, \cite{SimsekMMAS2015}, \cite{SrivatavaChoi}, \cite{Sury}).

On the other hand, it is well known that the approximation theory is of
vital importance in order to derive inequalities and apply them to other
related multi-disciplinary areas. Therefore, the third motivation is to
investigate approximation values of a new numbers derived from the numbers $Y_{n}\left( \lambda \right) $ via the Stirling's approximation formula and
approximation value of the Catalan numbers.

In order to prove the results of the present paper, we first have to give
the following definitions and relations associated with some special numbers
and polynomials, and their generating functions:

The $k$-th order Apostol--Bernoulli numbers $\mathcal{B}_{n}^{\left( k\right)
}\left( \lambda \right) $ are defined by%
\begin{equation}
\left( \frac{t}{\lambda e^{t}-1}\right) ^{k}=\sum_{n=0}^{\infty }\mathcal{B}%
_{n}^{\left( k\right) }\left( \lambda \right) \frac{t^{n}}{n!}
\label{Apostol-B-hig}
\end{equation}%
where $\left\vert t\right\vert <2\pi $ when $\lambda =1$ and $\left\vert
t\right\vert <\left\vert \log \left( \lambda \right) \right\vert $ when $%
\lambda \neq 1$. Some special cases of these numbers are given as follows:

When $k=1$, (\ref{Apostol-B-hig}) reduces to the Apostol--Bernoulli numbers:%
\[
\mathcal{B}_{n}\left( \lambda \right) =\mathcal{B}_{n}^{\left( 1\right)
}\left( \lambda \right) . 
\]%
When $\lambda =1$, the numbers $\mathcal{B}_{n}\left( \lambda \right) $
reduce to the classical Bernoulli numbers%
\[
B_{n}=\mathcal{B}_{n}\left( 1\right) 
\]%
(\textit{cf}. \cite{apostol}-\cite{SrivatavaChoi}; see also the references
cited therein).

It is well known that the $k$-th order Apostol--Bernoulli numbers are
interpolated by the zeta type function $\zeta \left( \lambda ,s,k\right)$ at negative integers with the following relation:
\begin{equation}
\zeta \left( \lambda ,-m,k\right) =\sum_{n=0}^{\infty }\left( 
\begin{array}{c}
n+k-1 \\ 
n%
\end{array}%
\right) \lambda ^{n}n^{m}=-\frac{\mathcal{B}_{m+1}^{\left( k\right) }\left(
	\lambda \right) }{m+1}  \label{Apostol-Int}
\end{equation}%
(\textit{cf}. \cite{apostol}, \cite{srivas18}, \cite{SrivatavaChoi}; see
also the references cited therein).

The $k$-th order Apostol--Euler numbers $\mathcal{E}_{n}^{\left( k\right)
}\left( \lambda \right) $ are defined by means of the following generating
function:%
\begin{equation}
\left( \frac{2}{\lambda e^{t}+1}\right) ^{k}=\sum_{n=0}^{\infty }\mathcal{E}%
_{n}^{\left( k\right) }\left( \lambda \right) \frac{t^{n}}{n!}
\label{Apostol-E-hig}
\end{equation}%
where $\left\vert t\right\vert <\left\vert \log \left( -\lambda \right)
\right\vert $. Obviously,\ in the special case when $k=1$, (\ref%
{Apostol-E-hig}) reduces to the Apostol--Euler numbers:%
\[
\mathcal{E}_{n}\left( \lambda \right) =\mathcal{E}_{n}^{\left( 1\right)
}\left( \lambda \right).
\]%
For $\lambda =1$, the numbers $\mathcal{E}_{n}\left( \lambda \right) $
reduce to the classical Euler numbers%
\[
E_{n}=\mathcal{E}_{n}\left( 1\right) 
\]%
(\textit{cf}. \cite{apostol}-\cite{SrivatavaChoi}; see also the references
cited therein).

The Stirling numbers of the first kind $S_{1}\left( n,k\right) $ are defined
by:%
\begin{equation*}
\left( x\right) _{n}=\sum_{k=0}^{n}S_{1}\left( n,k\right) x^{k}
\end{equation*}%
where ${\left( x\right) }_{n}=x\left( x-1\right) \left( x-2\right) \ldots
\left( x-n+1\right) $ and their generating function is given by:
\begin{equation}
\sum_{n=k}^{\infty }S_{1}\left( n,k\right) \frac{t^{n}}{n!}=\frac{\left[
	log\left( 1+t\right) \right] ^{k}}{k!}
\label{Gen-FirstStirling}
\end{equation}
(\textit{cf}. \cite{Jordan1950}, \cite{SrivatavaChoi}; see also the
references cited in each of these earlier works).

The Catalan numbers $C_{n}$ are defined by means of the following generating
functions (\textit{cf}. \cite[p. 121]{KoshyBOOK}):

\[
\frac{1-\sqrt{1-4t}}{2t}=\sum_{n=0}^{\infty }C_{n}t^{n} 
\]%
where $0<\left\vert t\right\vert \leq \frac{1}{4}$ and $C_{0}=1$ (\textit{cf}. 
\cite[pp. 119-120]{KoshyBOOK}). The explicit formula and the recurrence
relation for the Catalan numbers are given as follows, respectively:%
\begin{equation}
C_{n}=\frac{1}{n+1}\left( 
\begin{array}{c}
2n \\ 
n%
\end{array}%
\right)  \label{Expl-Catalan}
\end{equation}%
where $n\geq 0$, and%
\begin{equation}
\frac{C_{n}}{C_{n-1}}=\frac{4n-2}{n+1},  \label{CRecur}
\end{equation}%
where $n\geq 1$\ (\textit{cf}. \cite[pp. 109-110]{KoshyBOOK}).

We summarize the present paper as follows:

Motivation of Section 1 is to give definition of higher-order of the numbers 
$Y_{n}(\lambda )$ and the polynomials $Y_{n}(x,\lambda )$. In section 2, by
using generating function and their functional equations methods, some
identities and relations, including the numbers $Y_{n}^{\left( k\right)
}\left( \lambda \right) $, the Apostol-type numbers, the Stirling numbers of
the first kind, the Bernstein basis functions, and also combinatorial sums, are derived. In Section 3, some derivative formulas and recurrence formulas
for the numbers $Y_{n}^{\left( k\right) }\left( \lambda \right) $ are
computed. In Section 4,\ by using functional equations of the generating
function, some Vandermonde type convolution formulas are derived, and some
special values of these formulas related to the Catalan numbers and
combinatorial sums are investigated. In the last section, by using
Stirling's approximation and approximation of the Catalan numbers, some
approximation value of the special case of the numbers $Y_{n}^{\left(
	k\right) }\left( \lambda \right) $ are given.

\section{The numbers $Y_{n}^{\left( k\right) }\left( \protect\lambda \right) 
	$ and the polynomials $Y_{n}^{\left( k\right) }\left( x;\protect\lambda %
	\right) $}

In this section, our motivation is to define higher-order of recently
discovered families of numbers and polynomials unifying the Apostol-type
numbers and the Apostol-type polynomials.

For $k$ nonnegative integers and $\lambda $ real or complex numbers, we
define the numbers $Y_{n}^{\left( k\right) }\left( \lambda \right) $ by the
following generating function:%
\begin{equation}
\mathcal{F}\left( t,k;\lambda \right) =\left( \frac{2}{\lambda \left(
	1+\lambda t\right) -1}\right) ^{k}=\sum_{n=0}^{\infty }Y_{n}^{\left(
	k\right) }\left( \lambda \right) \frac{t^{n}}{n!}.  \label{GenFHigOrdY}
\end{equation}%
Note that, if we set $k=1$ in (\ref{GenFHigOrdY}), the functions $\mathcal{F}%
\left( t,k;\lambda \right) $ reduce to the following generating functions
for the numbers $Y_{n}\left( \lambda \right) $ defined by Simsek in \cite%
{simsekTJM}:%
\begin{equation}
F\left( t,\lambda \right) =\frac{2}{\lambda \left( 1+\lambda t\right) -1}%
=\sum\limits_{n=0}^{\infty }Y_{n}\left( \lambda \right) \frac{t^{n}}{n!}.
\label{YNumGenFunc}
\end{equation}%
That is, the numbers $Y_{n}^{\left( k\right) }\left( \lambda \right) $
denote higher-order of the numbers $Y_{n}\left( \lambda \right) $ were
defined by Simsek in \cite{simsekTJM}.

Here, we also define the polynomials $Y_{n}^{\left( k\right) }\left(
x;\lambda \right) $ by the following generating function:%
\begin{equation}
\mathcal{F}\left( t,x,k;\lambda \right) =\mathcal{F}\left( t,k;\lambda
\right) \left( 1+\lambda t\right) ^{x}=\sum_{n=0}^{\infty }Y_{n}^{\left(
	k\right) }\left( x;\lambda \right) \frac{t^{n}}{n!}.  \label{GenFHigOrdYpoly}
\end{equation}%
Note that, if we set $k=1$ in (\ref{GenFHigOrdYpoly}), the functions $%
\mathcal{F}\left( t,x,k;\lambda \right) $ reduce to the following generating
functions for the polynomials $Y_{n}\left( x;\lambda \right) $ defined by
Simsek in \cite{simsekTJM}:%
\begin{equation}
F\left( t,x,\lambda \right) =\frac{2\left( 1+\lambda t\right) ^{x}}{\lambda
	\left( 1+\lambda t\right) -1}=\sum\limits_{n=0}^{\infty }Y_{n}\left(
x;\lambda \right) \frac{t^{n}}{n!}.  \label{YPolyGenFunc}
\end{equation}%
That is, the polynomials $Y_{n}^{\left( k\right) }\left( x;\lambda \right) $
denote higher-order of the polynomials $Y_{n}\left( x;\lambda \right) $ were
defined by Simsek in \cite{simsekTJM}.

It should be note that
\begin{eqnarray*}
	Y_{n}^{\left( k\right) }\left( \lambda \right) &=&Y_{n}^{\left( k\right)
	}\left( 0;\lambda \right)
\end{eqnarray*}
and it is obvious that%
\[
Y_{n}\left(\lambda \right) =Y_{n}^{\left( 1\right)
}\left(\lambda \right),
\]
\[
Y_{n}\left(x; \lambda \right) =Y_{n}^{\left( 1\right)}\left( x;\lambda \right).
\]

From (\ref{GenFHigOrdY}), we have%
\[
\sum_{n=0}^{\infty }Y_{n}^{\left( k\right) }\left( \lambda \right) \frac{%
	t^{n}}{n!}=\frac{2^{k}}{\left( \lambda -1\right) ^{k}\left( \frac{\lambda
		^{2}}{\lambda -1}t+1\right) ^{k}} 
\]%
By using negative binomial expansion, for $\left\vert \frac{\lambda ^{2}}{%
	\lambda -1}t\right\vert <1$, in the above equation, we get%
\begin{equation}
\sum_{n=0}^{\infty }Y_{n}^{\left( k\right) }\left( \lambda \right) \frac{%
	t^{n}}{n!}=\frac{2^{k}}{\left( \lambda -1\right) ^{k}}\sum_{n=0}^{\infty
}\left( -1\right) ^{n}\left( 
\begin{array}{c}
k+n-1 \\ 
n%
\end{array}%
\right) \left( \frac{\lambda ^{2}}{\lambda -1}\right) ^{n}t^{n}.
\label{NewtonBinomWay1}
\end{equation}%
Comparing the coefficients of $t^{n}$ on both sides of the above equation
yields an explicit formula for the numbers $Y_{n}^{\left( k\right) }\left(
\lambda \right) $ by the following theorem:

\begin{thm}
	\begin{equation}
	Y_{n}^{\left( k\right) }\left( \lambda \right) =\left( -1\right) ^{n} \left( 
	\begin{array}{c}
	k+n-1 \\ 
	n%
	\end{array}
	\right) \frac{2^{k}n!\lambda ^{2n}}{\left( \lambda -1\right) ^{k+n}}.
	\label{ExplFormulaYk}
	\end{equation}
\end{thm}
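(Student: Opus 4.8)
The plan is to read off the explicit formula directly from equation \eqref{NewtonBinomWay1}, which the author has already derived just before the theorem statement. The entire argument is a coefficient comparison, so almost all of the work is done; what remains is to match the power-series coefficient of $t^n$ on the right-hand side of \eqref{NewtonBinomWay1} against the coefficient of $t^n$ in the defining expansion $\sum_{n=0}^{\infty} Y_{n}^{(k)}(\lambda)\,\frac{t^{n}}{n!}$.

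First I would isolate the coefficient of $t^n$ on the left-hand side of \eqref{NewtonBinomWay1}, which by definition is $Y_{n}^{(k)}(\lambda)/n!$. Next I would isolate the coefficient of $t^n$ on the right-hand side: since the series there is $\frac{2^{k}}{(\lambda-1)^{k}}\sum_{n=0}^{\infty}(-1)^{n}\binom{k+n-1}{n}\bigl(\frac{\lambda^{2}}{\lambda-1}\bigr)^{n}t^{n}$, the coefficient of $t^n$ is simply $\frac{2^{k}}{(\lambda-1)^{k}}(-1)^{n}\binom{k+n-1}{n}\bigl(\frac{\lambda^{2}}{\lambda-1}\bigr)^{n}$. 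Equating the two coefficients gives
\[
\frac{Y_{n}^{(k)}(\lambda)}{n!}=(-1)^{n}\binom{k+n-1}{n}\frac{2^{k}}{(\lambda-1)^{k}}\,\frac{\lambda^{2n}}{(\lambda-1)^{n}}.
\]

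Finally I would clear the factorial and collect the powers of $(\lambda-1)$: multiplying through by $n!$ and combining $(\lambda-1)^{-k}(\lambda-1)^{-n}=(\lambda-1)^{-(k+n)}$ yields exactly
\[
Y_{n}^{(k)}(\lambda)=(-1)^{n}\binom{k+n-1}{n}\frac{2^{k}\,n!\,\lambda^{2n}}{(\lambda-1)^{k+n}},
\]
which is the claimed formula \eqref{ExplFormulaYk}. There is no real obstacle here: the only points requiring any care are the validity of the negative binomial expansion, which holds on the domain $\bigl|\frac{\lambda^{2}}{\lambda-1}t\bigr|<1$ already stated in the derivation, and the bookkeeping of the exponents of $(\lambda-1)$. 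Since the power series for an analytic function is unique, the coefficient comparison is legitimate and the proof is immediate.
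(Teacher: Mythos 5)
Your proposal is correct and follows exactly the paper's own argument: the paper derives \eqref{NewtonBinomWay1} via the negative binomial expansion and then obtains \eqref{ExplFormulaYk} by the same coefficient comparison of $t^{n}$ that you carry out, including the step of multiplying by $n!$ and merging the powers of $(\lambda-1)$. Nothing is missing; your attention to the convergence condition $\left\vert \frac{\lambda ^{2}}{\lambda -1}t\right\vert <1$ matches the hypothesis already stated in the derivation.
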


For the case of $k=1$, (\ref{ExplFormulaYk}) reduces to the explicit formula
for the numbers $Y_{n}\left( \lambda \right) $:%
\[
Y_{n}\left( \lambda \right) =\left( -1\right) ^{n}\frac{2n!}{\lambda -1}%
\left( \frac{\lambda ^{2}}{\lambda -1}\right) ^{n} 
\]%
which was given in \cite{simsekTJM}.

By using the explicit formula in (\ref{ExplFormulaYk}), a few values of the
the numbers $Y_{n}^{\left( k\right) }\left( \lambda \right) $ are computed
as follows:%
\begin{eqnarray*}
	Y_{0}^{\left( 2\right) }\left( \lambda \right) &=&\frac{4}{\left( \lambda
		-1\right) ^{2}},Y_{1}^{\left( 2\right) }\left( \lambda \right) =-\frac{%
		8\lambda ^{2}}{\left( \lambda -1\right) ^{3}}, \\
	Y_{2}^{\left( 2\right) }\left( \lambda \right) &=&\frac{24\lambda ^{4}}{%
		\left( \lambda -1\right) ^{4}},Y_{3}^{\left( 2\right) }\left( \lambda
	\right) =-\frac{96\lambda ^{6}}{\left( \lambda -1\right) ^{5}},\ldots
\end{eqnarray*}%
\begin{eqnarray*}
	Y_{0}^{\left( 3\right) }\left( \lambda \right) &=&\frac{8}{\left( \lambda
		-1\right) ^{3}},Y_{1}^{\left( 3\right) }\left( \lambda \right) =-\frac{%
		24\lambda ^{2}}{\left( \lambda -1\right) ^{4}}, \\
	Y_{2}^{\left( 3\right) }\left( \lambda \right) &=&\frac{96\lambda ^{4}}{%
		\left( \lambda -1\right) ^{5}},Y_{3}^{\left( 3\right) }\left( \lambda
	\right) =-\frac{480\lambda ^{6}}{\left( \lambda -1\right) ^{6}},\ldots.
\end{eqnarray*}

It is follows from (\ref{ExplFormulaYk}) that the numbers $Y_{n}^{\left( k\right) }$ holds a recurrence relation given by the following theorem:
\begin{thm}
	Let $n$ be a positive integer and $k$ be a nonnegative integers. By setting%
	\[
	Y_{0}^{\left( k\right) }\left( \lambda \right) =\frac{2^{k}}{\left( \lambda
		-1\right) ^{k}} 
	\]%
	the following recurrence relation holds true:%
	\[
	Y_{n}^{\left( k\right) }\left( \lambda \right) =\frac{\lambda ^{2}}{%
		1-\lambda }\left( n+k-1\right) Y_{n-1}^{\left( k\right) }\left( \lambda
	\right) . 
	\]
\end{thm}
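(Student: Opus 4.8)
The plan is to derive the recurrence directly from the closed-form expression established in the preceding theorem, equation (\ref{ExplFormulaYk}), by forming the ratio of two consecutive terms. First I would record the explicit formula at indices $n$ and $n-1$,
\[
Y_{n}^{(k)}(\lambda)=(-1)^{n}\binom{k+n-1}{n}\frac{2^{k}n!\,\lambda^{2n}}{(\lambda-1)^{k+n}},\qquad
Y_{n-1}^{(k)}(\lambda)=(-1)^{n-1}\binom{k+n-2}{n-1}\frac{2^{k}(n-1)!\,\lambda^{2n-2}}{(\lambda-1)^{k+n-1}},
\]
both valid for $n\ge 1$. Before dividing, I would check the anchoring value: substituting $n=0$ into (\ref{ExplFormulaYk}) gives $(-1)^{0}\binom{k-1}{0}2^{k}0!/(\lambda-1)^{k}=2^{k}/(\lambda-1)^{k}$, which agrees with the stipulated value of $Y_{0}^{(k)}(\lambda)$, so that together with the ratio the entire sequence is determined.

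The core step is to simplify the quotient $Y_{n}^{(k)}(\lambda)/Y_{n-1}^{(k)}(\lambda)$ factor by factor. The sign contributes $(-1)^{n}/(-1)^{n-1}=-1$; the factorials give $n!/(n-1)!=n$; the powers of $\lambda$ give $\lambda^{2n}/\lambda^{2n-2}=\lambda^{2}$; and the powers of $\lambda-1$ give $(\lambda-1)^{-(k+n)}/(\lambda-1)^{-(k+n-1)}=(\lambda-1)^{-1}$. The ratio of binomial coefficients is handled by the absorption identity
\[
\binom{k+n-1}{n}=\frac{k+n-1}{n}\binom{k+n-2}{n-1},
\]
which yields $\binom{k+n-1}{n}/\binom{k+n-2}{n-1}=(k+n-1)/n$. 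Multiplying these five contributions together gives $(-1)\cdot\frac{k+n-1}{n}\cdot n\cdot\lambda^{2}\cdot\frac{1}{\lambda-1}=\frac{\lambda^{2}(n+k-1)}{1-\lambda}$, and rearranging $Y_{n}^{(k)}(\lambda)=\bigl(Y_{n}^{(k)}(\lambda)/Y_{n-1}^{(k)}(\lambda)\bigr)\,Y_{n-1}^{(k)}(\lambda)$ produces the claimed recurrence.

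I do not anticipate any genuine obstacle: the argument is a routine cancellation once (\ref{ExplFormulaYk}) is available, and the only places to exercise care are the bookkeeping in the binomial absorption identity and the passage between $\lambda-1$ and $1-\lambda$ in the sign. As an alternative that bypasses the explicit formula, one could differentiate the generating function (\ref{GenFHigOrdY}) in $t$: setting $u=\lambda-1+\lambda^{2}t$ one finds $u\,\partial_{t}\mathcal{F}(t,k;\lambda)=-k\lambda^{2}\mathcal{F}(t,k;\lambda)$, and comparing the coefficients of $t^{n}/n!$ on both sides gives $(\lambda-1)Y_{n+1}^{(k)}(\lambda)=-\lambda^{2}(n+k)Y_{n}^{(k)}(\lambda)$, which is the same relation after the shift $n\mapsto n-1$. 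I would present the quotient method as the main proof, since it is what the phrase ``it follows from (\ref{ExplFormulaYk})'' suggests, and retain the generating-function derivation as a cross-check.
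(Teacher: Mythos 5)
Your proposal is correct and follows exactly the route the paper intends: the paper offers no written proof beyond the remark that the recurrence ``follows from (\ref{ExplFormulaYk})'', and your quotient computation (sign, factorial, $\lambda$-power, $(\lambda-1)$-power, and the binomial absorption identity $\binom{k+n-1}{n}=\frac{k+n-1}{n}\binom{k+n-2}{n-1}$) is precisely the calculation being alluded to, with the $n=0$ anchor checked as well. The only cosmetic refinement would be to phrase the argument as cross-multiplication rather than division, so as not to assume $Y_{n-1}^{(k)}(\lambda)\neq 0$ (which fails only in the degenerate case $\lambda=0$, where both sides vanish); your generating-function cross-check is also valid.
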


We also give another recurrence relation for computing the numbers $%
Y_{n}^{\left( k\right) }\left( \lambda \right) $ by the following theorem:

\begin{thm}
	Let%
	\[
	Y_{0}^{\left( k\right) }\left( \lambda \right) =\frac{2^{k}}{\left( \lambda
		-1\right) ^{k}}
	\]%
	and let $n$ be positive integer. Then we have%
	\begin{equation}
	\sum_{j=0}^{k}\left( -1\right) ^{k-j}\left( n\right) _{j}\left( 
	\begin{array}{c}
	k \\ 
	j%
	\end{array}%
	\right) \lambda ^{2j}\left( 1-\lambda \right) ^{k-j}Y_{n-j}^{\left( k\right)
	}\left( \lambda \right) =0.  \label{New-7}
	\end{equation}
\end{thm}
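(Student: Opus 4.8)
The plan is to exploit the defining generating function $(\ref{GenFHigOrdY})$ directly, rather than manipulating the explicit formula. The key observation is that the relation $(\ref{New-7})$ is exactly what one obtains by clearing denominators in the generating function identity. Writing $\mathcal{F}(t,k;\lambda) = 2^k\bigl(\lambda(1+\lambda t)-1\bigr)^{-k}$, I would first rewrite the denominator as $\lambda(1+\lambda t)-1 = (\lambda-1) + \lambda^2 t$, so that $(\ref{GenFHigOrdY})$ is equivalent to the polynomial identity
\begin{equation*}
\bigl(\lambda^2 t + (\lambda-1)\bigr)^{k}\,\sum_{n=0}^{\infty} Y_{n}^{(k)}(\lambda)\,\frac{t^{n}}{n!} = 2^{k}.
\end{equation*}
The factor $2^k$ on the right is a constant, so all coefficients of $t^n$ for $n\geq 1$ on the left must vanish; this vanishing is precisely the asserted recurrence.

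**Next**, I would expand the finite power by the binomial theorem,
\begin{equation*}
\bigl(\lambda^2 t + (\lambda-1)\bigr)^{k} = \sum_{j=0}^{k}\binom{k}{j}\lambda^{2j}(\lambda-1)^{k-j}\,t^{j},
\end{equation*}
and multiply this polynomial against the power series. Collecting the coefficient of $t^{n}$ in the product requires the Cauchy product: the $t^{n}$ term receives a contribution from the $t^{j}$ term of the binomial expansion paired with the $t^{n-j}$ term of the series, namely $Y_{n-j}^{(k)}(\lambda)/(n-j)!$. Multiplying through by $n!$ to clear the factorial denominators and using $n!/(n-j)! = (n)(n-1)\cdots(n-j+1) = (n)_{j}$ converts $\binom{k}{j}/(n-j)!$ into the factor $(n)_{j}\binom{k}{j}/n!$, which after rescaling yields exactly the coefficients $(n)_{j}\binom{k}{j}\lambda^{2j}(\lambda-1)^{k-j}$ appearing in $(\ref{New-7})$.

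**The one point requiring care** is reconciling the sign: the statement writes $(-1)^{k-j}(1-\lambda)^{k-j}$ where my expansion naturally produces $(\lambda-1)^{k-j}$. Since $(\lambda-1)^{k-j} = (-1)^{k-j}(1-\lambda)^{k-j}$, these agree, so the factor $(-1)^{k-j}$ in $(\ref{New-7})$ is simply absorbing this rewriting and the formulas are identical. Setting the coefficient of $t^{n}$ equal to zero for $n\geq 1$ (since the right-hand side $2^k$ contributes only to the $n=0$ coefficient) then gives the claimed identity after dividing by the common constant.

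**The main obstacle** is essentially bookkeeping rather than conceptual: one must confirm that the convolution indices line up so that the summand is indeed $(n)_{j}\binom{k}{j}\lambda^{2j}(1-\lambda)^{k-j}Y_{n-j}^{(k)}(\lambda)$ and not some shifted variant, and verify the sign convention as above. I would double-check the result against the small values tabulated earlier in the section (for instance the $k=2$, $k=3$ entries), which provides an immediate numerical sanity test that the recurrence is correctly normalized.
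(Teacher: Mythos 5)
Your proposal is correct and follows essentially the same route as the paper's own proof: clear the denominator in the generating function to get $\bigl(\lambda^{2}t+\lambda-1\bigr)^{k}\sum_{n\geq 0}Y_{n}^{(k)}(\lambda)\,t^{n}/n!=2^{k}$, expand by the binomial theorem, shift indices so the falling factorial $(n)_{j}=n!/(n-j)!$ appears, rewrite $(\lambda-1)^{k-j}=(-1)^{k-j}(1-\lambda)^{k-j}$, and compare coefficients of $t^{n}$ for $n\geq 1$. The paper carries out exactly these steps, so there is nothing to add.
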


\begin{proof}
	From (\ref{GenFHigOrdY}), we have%
	\[
	2^{k}=\left( \lambda ^{2}t+\lambda -1\right) ^{k}\sum_{n=0}^{\infty
	}Y_{n}^{\left( k\right) }\left( \lambda \right) \frac{t^{n}}{n!}.
	\]%
	Using binomial theorem yields%
	\[
	2^{k}=\sum_{n=0}^{\infty }\sum_{j=0}^{k}\left( 
	\begin{array}{c}
	k \\ 
	j%
	\end{array}%
	\right) \lambda ^{2j}\left( \lambda -1\right) ^{k-j}Y_{n}^{\left( k\right)
	}\left( \lambda \right) \frac{t^{n+j}}{n!}
	\]%
	Thus,%
	\begin{eqnarray*}
		2^{k} &=&\sum_{n=0}^{\infty }\sum_{j=0}^{k}\left( n\right) _{j}\left( 
		\begin{array}{c}
			k \\ 
			j%
		\end{array}%
		\right) \lambda ^{2j}\left( \lambda -1\right) ^{k-j}Y_{n-j}^{\left( k\right)
		}\left( \lambda \right) \frac{t^{n}}{n!} \\
		&=&\sum_{n=0}^{\infty }\sum_{j=0}^{k}\left( -1\right) ^{k-j}\left( n\right)
		_{j}\left( 
		\begin{array}{c}
			k \\ 
			j%
		\end{array}%
		\right) \lambda ^{2j}\left( 1-\lambda \right) ^{k-j}Y_{n-j}^{\left( k\right)
		}\left( \lambda \right) \frac{t^{n}}{n!}.
	\end{eqnarray*}%
	From the above equation, we get the assertion of the theorem.
\end{proof}

Furthermore, we provide a relation between the numbers $%
Y_{n}^{\left( k\right) }\left( \lambda \right) $ and the polynomials $%
Y_{n}^{\left( k\right) }\left( x;\lambda \right) $ by the following
theorem:
\begin{thm}
	Let $n$ be a nonnegative integer. Then we have%
	\[
	Y_{n}^{\left( k\right) }\left( x;\lambda \right) =\sum_{j=0}^{n}\left( 
	\begin{array}{c}
	n \\ 
	j%
	\end{array}%
	\right) \lambda ^{n-j}\left( x\right) _{n-j}Y_{j}^{\left( k\right) }\left(
	\lambda \right) .
	\]
\end{thm}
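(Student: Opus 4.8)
The plan is to start from the defining generating function (\ref{GenFHigOrdYpoly}), which factors the polynomial generating function as the product $\mathcal{F}\left( t,k;\lambda \right) \left( 1+\lambda t\right) ^{x}$, and to read off the identity by forming the product of two power series and comparing the coefficients of $\frac{t^{n}}{n!}$.

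First I would expand the factor $\left( 1+\lambda t\right) ^{x}$ by the binomial series. Writing $\binom{x}{m}=\frac{\left( x\right) _{m}}{m!}$ in terms of the falling factorial $\left( x\right) _{m}=x\left( x-1\right) \cdots \left( x-m+1\right) $ already used in the paper, this gives
\[
\left( 1+\lambda t\right) ^{x}=\sum_{m=0}^{\infty }\binom{x}{m}\lambda ^{m}t^{m}=\sum_{m=0}^{\infty }\left( x\right) _{m}\lambda ^{m}\frac{t^{m}}{m!}.
\]
This is the step that produces both the falling-factorial factor $\left( x\right) _{n-j}$ and the power $\lambda ^{n-j}$ appearing in the claimed formula.

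Next I would substitute this expansion together with the series $\mathcal{F}\left( t,k;\lambda \right) =\sum_{j=0}^{\infty }Y_{j}^{\left( k\right) }\left( \lambda \right) \frac{t^{j}}{j!}$ from (\ref{GenFHigOrdY}) into (\ref{GenFHigOrdYpoly}), and form the Cauchy product of these two exponential generating functions. The Cauchy product rule for exponential generating functions is precisely what inserts the binomial coefficient $\binom{n}{j}$: the coefficient of $\frac{t^{n}}{n!}$ in the product equals $\sum_{j=0}^{n}\binom{n}{j}Y_{j}^{\left( k\right) }\left( \lambda \right) \left( x\right) _{n-j}\lambda ^{n-j}$. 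Equating this with the coefficient $Y_{n}^{\left( k\right) }\left( x;\lambda \right) $ coming from the left-hand side of (\ref{GenFHigOrdYpoly}) yields the assertion.

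I expect no serious obstacle here; the only point requiring care is the bookkeeping in the Cauchy product and the correct reindexing so that the power of $\lambda $ and the falling factorial both attach to the index $n-j$ while $Y_{j}^{\left( k\right) }$ carries the index $j$. One should also recall the convergence restriction $\left\vert \frac{\lambda ^{2}}{\lambda -1}t\right\vert <1$ invoked earlier, which guarantees that the generating functions are genuine (absolutely convergent) power series, so that multiplying them and comparing coefficients term by term is legitimate.
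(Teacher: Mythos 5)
Your proposal is correct and follows essentially the same route as the paper's own proof: expand $\left( 1+\lambda t\right) ^{x}$ as $\sum_{m=0}^{\infty }\left( x\right) _{m}\lambda ^{m}\frac{t^{m}}{m!}$, multiply by the series from (\ref{GenFHigOrdY}) via the Cauchy product, and equate coefficients of $\frac{t^{n}}{n!}$ in (\ref{GenFHigOrdYpoly}). Your added remark on convergence is a harmless refinement that the paper leaves implicit.
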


\begin{proof}
	It follows from equations (\ref{GenFHigOrdY}) and (\ref{GenFHigOrdYpoly})
	that%
	\[
	\sum_{n=0}^{\infty }Y_{n}^{\left( k\right) }\left( x;\lambda \right) \frac{%
		t^{n}}{n!}=\sum_{n=0}^{\infty }\left( x\right) _{n}\lambda ^{n}\frac{t^{n}}{%
		n!}\sum_{n=0}^{\infty }Y_{n}^{\left( k\right) }\left( \lambda \right) \frac{%
		t^{n}}{n!}. 
	\]%
	Using Cauchy product rule and equalizing the coefficients of the variable $\frac{t^{n}}{n!}$ in the previous equation yields the assertion of the theorem.
\end{proof}

\section{Some identities and relations for the numbers $Y_{n}^{\left(
		k\right) }\left( \protect\lambda \right) $}

In this section, by using generating functions and their functional
equations, we derive some identities and relations including the numbers $%
Y_{n}^{\left( k\right) }\left( \lambda \right) $, the Apostol-type numbers,
the Stirling numbers of the first kind, the Bernstein basis functions, and
also combinatorial sums.

\begin{thm}
	\[
	\sum_{j=0}^{n}\left( -1\right) ^{j}Y_{j}\left( \lambda \right) Y_{n-j}\left(
	\lambda \right) =\frac{4\lambda ^{2n}\left( 1+\left( -1\right) ^{n}\right)
		\left( n+1\right) !}{\left( n+2\right) \left( \lambda -1\right) ^{n+2}}. 
	\]
\end{thm}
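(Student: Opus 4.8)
The plan is to bypass generating functions and attack the sum directly through the closed form for $Y_{n}(\lambda)$ supplied by the first theorem of Section 2 at $k=1$, namely $Y_{n}(\lambda)=(-1)^{n}\,\dfrac{2\,n!\,\lambda^{2n}}{(\lambda-1)^{n+1}}$, which is the $k=1$ case of (\ref{ExplFormulaYk}). Substituting this for both factors in the summand, the leading sign $(-1)^{j}$ cancels the $(-1)^{j}$ coming from $Y_{j}(\lambda)$, and collecting the powers via $\lambda^{2j}\lambda^{2(n-j)}=\lambda^{2n}$ and $(\lambda-1)^{j+1}(\lambda-1)^{n-j+1}=(\lambda-1)^{n+2}$ gives
\[
(-1)^{j}\,Y_{j}(\lambda)\,Y_{n-j}(\lambda)=(-1)^{n-j}\,\frac{4\,j!\,(n-j)!\,\lambda^{2n}}{(\lambda-1)^{n+2}}.
\]
The whole $\lambda$-dependence is now a single prefactor, so summing over $j$ reduces the assertion to the purely numerical identity
\[
\sum_{j=0}^{n}(-1)^{n-j}\,j!\,(n-j)! = \frac{\left(1+(-1)^{n}\right)(n+1)!}{n+2}.
\]

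So the real content is this combinatorial sum, and that is where the work lies. First I would divide both sides by $n!$, rewriting the left-hand side in the symmetric form $\sum_{j=0}^{n}(-1)^{n-j}/\binom{n}{j}$. The cleanest route from here is the Beta-integral representation of the reciprocal binomial coefficient,
\[
\frac{1}{\binom{n}{j}}=(n+1)\int_{0}^{1}x^{j}(1-x)^{n-j}\,dx,\qquad 0\le j\le n.
\]
Inserting this and interchanging the finite sum with the integral, the combination $(-1)^{n-j}(1-x)^{n-j}=(x-1)^{n-j}$ turns the integrand into $\sum_{j=0}^{n}x^{j}(x-1)^{n-j}$, a finite geometric series with ratio $x/(x-1)$; summing it collapses the integrand to $x^{n+1}-(x-1)^{n+1}$. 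Integrating each term over $[0,1]$ gives $\tfrac{1}{n+2}$ and $-\tfrac{(-1)^{n}}{n+2}$ respectively, so that the prefactor $n+1$ produces exactly $\frac{(n+1)\left(1+(-1)^{n}\right)}{n+2}$; multiplying back by $n!$ recovers the target.

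The main obstacle is genuinely that last identity: once the explicit formula is in hand, the algebra collapsing the powers of $\lambda$ is routine, whereas the alternating reciprocal-binomial sum is the step that requires an idea. An alternative to the Beta-integral computation is simply to cite the classical evaluation of $\sum_{j=0}^{n}(-1)^{j}/\binom{n}{j}$, which appears in standard compendia of binomial-coefficient sums (e.g. the works of Gould); since $(-1)^{n-j}=(-1)^{n}(-1)^{j}$, the two sums agree up to the harmless factor $(-1)^{n}$. As a consistency check, the odd-$n$ case can be settled independently by pairing the index $j$ with $n-j$: when $n$ is odd the two summands $(-1)^{n-j}j!(n-j)!$ and $(-1)^{j}(n-j)!\,j!$ are negatives of one another, so the sum vanishes, matching the factor $1+(-1)^{n}=0$.

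I note that the naive generating-function shortcut does \emph{not} apply here: forming $\mathcal{F}(t,1;\lambda)\,\mathcal{F}(-t,1;\lambda)$ produces the Cauchy product $\sum_{j}\binom{n}{j}(-1)^{n-j}Y_{j}(\lambda)Y_{n-j}(\lambda)$, which carries an extra binomial weight and is therefore a different sum from the unweighted convolution in the statement. This is precisely why I would route the proof through the explicit formula rather than through a functional equation.
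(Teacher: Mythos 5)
Your proof is correct and its skeleton is the same as the paper's: both substitute the explicit formula $Y_{n}(\lambda)=(-1)^{n}\,2\,n!\,\lambda^{2n}/(\lambda-1)^{n+1}$ into the convolution, collapse the $\lambda$-dependence into a single prefactor, and thereby reduce the claim to the evaluation of the alternating reciprocal-binomial sum $\sum_{j=0}^{n}(-1)^{j}/\binom{n}{j}=\left(1+(-1)^{n}\right)\frac{n+1}{n+2}$. The only divergence is in how that key identity is handled: the paper simply cites it from Gould and Simsek (exactly your fallback suggestion), whereas you prove it outright via the Beta-integral representation $1/\binom{n}{j}=(n+1)\int_{0}^{1}x^{j}(1-x)^{n-j}\,dx$ and the collapsing geometric sum, which is a correct and self-contained addition but not a genuinely different route.
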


\begin{proof}
	\begin{equation}
	\sum_{j=0}^{n}\left( -1\right) ^{j}Y_{j}\left( \lambda \right) Y_{n-j}\left(
	\lambda \right) =\frac{4\left( -1\right) ^{n}n!}{\left( \lambda -1\right)
		^{2}}\left( \frac{\lambda ^{2}}{\lambda -1}\right) ^{n}\sum_{j=0}^{n}\left(
	-1\right) ^{j}\frac{1}{\left( 
		\begin{array}{c}
		n \\ 
		j%
		\end{array}%
		\right) }  \label{New-2}
	\end{equation}%
	According to Simsek \cite[Eq-(13)]{SimsekFilomat2016} and Gould \cite[Eq-(5.13)]{GouldVol.3}, the following identity holds true
	\[
	\sum_{j=0}^{n}\left( -1\right) ^{j}\frac{1}{\left( 
		\begin{array}{c}
		n \\ 
		j%
		\end{array}%
		\right) }=\left( 1+\left( -1\right) ^{n}\right) \frac{n+1}{n+2}. 
	\]%
	Combining the above equation with (\ref{New-2}) yields the assertion of
	theorem.
\end{proof}

\begin{thm}
	\[
	\sum_{j=0}^{n}Y_{j}\left( \lambda \right) Y_{n-j}\left( \lambda \right) =%
	\frac{\left( -1\right) ^{n}\left( n+1\right) !}{2^{n-2}\left( \lambda
		-1\right) ^{2}}\left( \frac{\lambda ^{2}}{\lambda -1}\right)
	^{n}\sum_{j=0}^{n}\frac{2^{j}}{j+1}. 
	\]
\end{thm}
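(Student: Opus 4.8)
The plan is to run the same computation that produced equation (\ref{New-2}) in the preceding proof, only now without the alternating sign $\left( -1\right) ^{j}$. First I would substitute the explicit formula
\[
Y_{m}\left( \lambda \right) =\left( -1\right) ^{m}\frac{2\,m!}{\lambda -1}\left( \frac{\lambda ^{2}}{\lambda -1}\right) ^{m}
\]
(the $k=1$ case of (\ref{ExplFormulaYk})) into each of the two factors $Y_{j}\left( \lambda \right) $ and $Y_{n-j}\left( \lambda \right) $. The sign combines to $\left( -1\right) ^{j}\left( -1\right) ^{n-j}=\left( -1\right) ^{n}$ and the power combines to $\left( \lambda ^{2}/\left( \lambda -1\right) \right) ^{n}$, both free of the summation index; the only surviving $j$--dependence is $j!\left( n-j\right) !=n!/\binom{n}{j}$. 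Hence the convolution collapses to
\[
\sum_{j=0}^{n}Y_{j}\left( \lambda \right) Y_{n-j}\left( \lambda \right) =\frac{4\left( -1\right) ^{n}n!}{\left( \lambda -1\right) ^{2}}\left( \frac{\lambda ^{2}}{\lambda -1}\right) ^{n}\sum_{j=0}^{n}\frac{1}{\binom{n}{j}},
\]
which is precisely the non--alternating analogue of (\ref{New-2}).

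The second step is to insert the known closed form for the sum of reciprocals of the binomial coefficients,
\[
\sum_{j=0}^{n}\frac{1}{\binom{n}{j}}=\frac{n+1}{2^{n}}\sum_{j=0}^{n}\frac{2^{j}}{j+1},
\]
which here plays exactly the role that the Gould--Simsek evaluation of $\sum_{j=0}^{n}\left( -1\right) ^{j}/\binom{n}{j}$ played in the previous theorem. Feeding this into the displayed intermediate identity and collapsing the numerical constant via $4\left( n+1\right) n!/2^{n}=\left( n+1\right) !/2^{n-2}$ reproduces the claimed right-hand side term by term, so no further manipulation is needed.

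The single nontrivial ingredient is the reciprocal-binomial identity above; everything else is forced bookkeeping of signs, factorials and powers of $\lambda ^{2}/\left( \lambda -1\right) $. I expect establishing (or citing) that identity to be the main obstacle. If one prefers a self-contained derivation to citing Gould/Sury, the cleanest route is induction on $n$: writing $a_{n}=\sum_{j=0}^{n}1/\binom{n}{j}$, one first derives the recurrence $a_{n}=\tfrac{n+1}{2n}\,a_{n-1}+1$ from Pascal's rule, and then checks that the proposed closed form $b_{n}=\tfrac{n+1}{2^{n}}\sum_{j=0}^{n}2^{j}/\left( j+1\right) $ satisfies the same recurrence with $a_{0}=b_{0}=1$, the term $j=n$ contributing exactly the $+1$. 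Alternatively the Beta-integral representation $1/\binom{n}{j}=\left( n+1\right) \int_{0}^{1}t^{j}\left( 1-t\right) ^{n-j}\,dt$ followed by a finite geometric summation under the integral sign yields the same evaluation.
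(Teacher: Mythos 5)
Your argument is correct and is essentially the paper's own proof: both reduce the convolution, via the $k=1$ explicit formula, to the identity $\sum_{j=0}^{n}Y_{j}\left(\lambda\right)Y_{n-j}\left(\lambda\right)=\frac{4\left(-1\right)^{n}n!}{\left(\lambda-1\right)^{2}}\left(\frac{\lambda^{2}}{\lambda-1}\right)^{n}\sum_{j=0}^{n}\frac{1}{\binom{n}{j}}$ and then invoke Sury's evaluation of the reciprocal-binomial sum. Your extra sketch of a self-contained proof of Sury's identity (by the recurrence $a_{n}=\frac{n+1}{2n}a_{n-1}+1$ or the Beta integral) is sound but goes beyond what the paper does, which simply cites \cite{Sury}.
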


\begin{proof}
	\begin{equation}
	\sum_{j=0}^{n}Y_{j}\left( \lambda \right) Y_{n-j}\left( \lambda \right) =%
	\frac{4\left( -1\right) ^{n}n!}{\left( \lambda -1\right) ^{2}}\left( \frac{%
		\lambda ^{2}}{\lambda -1}\right) ^{n}\sum_{j=0}^{n}\frac{1}{\left( 
		\begin{array}{c}
		n \\ 
		j%
		\end{array}%
		\right) }.  \label{IYNew-1}
	\end{equation}%
	According to Sury \cite{Sury}, the following identity holds true%
	\[
	\sum_{j=0}^{n}\frac{1}{\left( 
		\begin{array}{c}
		n \\ 
		j%
		\end{array}%
		\right) }=\frac{n+1}{2^{n}}\sum_{j=0}^{n}\frac{2^{j}}{j+1}. 
	\]%
	Combining the above equation with (\ref{IYNew-1}) yields assertions of the
	theorem.
\end{proof}

By setting $\lambda \in \left[ 0,1\right] $ in (\ref{New-7}) and using the
Bernstein basis functions $B_{j}^{k}\left( \lambda \right) $ given by (cf. 
\cite{LorentzBernstein}):%
\[
B_{j}^{k}\left( \lambda \right) =\left( 
\begin{array}{c}
k \\ 
j%
\end{array}%
\right) \lambda ^{j}\left( 1-\lambda \right) ^{k-j},
\]%
we derive a relation between the Bernstein basis functions and the numbers $%
Y_{n}^{\left( k\right) }\left( \lambda \right) $ are given by the following
theorem:

\begin{thm}
	Let $\lambda \in \left[ 0,1\right] $. If $n$ is a positive integer, then we
	have%
	\[
	\sum_{j=0}^{k}\left( -1\right) ^{k-j}\left( n\right) _{j}\lambda
	^{j}B_{j}^{k}\left( \lambda \right) Y_{n-j}^{\left( k\right) }\left( \lambda
	\right) =0. 
	\]
\end{thm}

\begin{thm}
	\[
	Y_{v}^{\left( k\right) }\left( \lambda \right) =\left( -1\right)
	^{k+1}2^{k}\lambda ^{v}\sum_{m=0}^{v}\frac{S_{1}\left( v,m\right) \mathcal{B}%
		_{m+1}^{\left( k\right) }\left( \lambda \right) }{m+1}. 
	\]
\end{thm}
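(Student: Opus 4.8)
The plan is to run a generating-function argument that passes through the substitution $u=\log(1+\lambda t)$: this converts the rational generating function (\ref{GenFHigOrdY}) into an exponential-type series in $u$ whose coefficients are exactly the zeta-type quantities of (\ref{Apostol-Int}), and then the Stirling numbers of the first kind serve as the change of basis that carries us back to the variable $t$. First I would rewrite the defining function. Since $\lambda(1+\lambda t)-1=\lambda e^{u}-1$ whenever $e^{u}=1+\lambda t$, equation (\ref{GenFHigOrdY}) becomes
\[
\mathcal{F}(t,k;\lambda)=\frac{2^{k}}{\left(\lambda e^{u}-1\right)^{k}}=(-1)^{k}2^{k}\,\frac{1}{\left(1-\lambda e^{u}\right)^{k}}.
\]

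Next I would expand $\left(1-\lambda e^{u}\right)^{-k}$ by the negative binomial series and then expand each $e^{nu}$, which is legitimate for $\left|\lambda e^{u}\right|<1$ (the same disc governing (\ref{NewtonBinomWay1})):
\[
\frac{1}{\left(1-\lambda e^{u}\right)^{k}}=\sum_{m=0}^{\infty}\left(\sum_{n=0}^{\infty}\binom{n+k-1}{n}\lambda^{n}n^{m}\right)\frac{u^{m}}{m!}.
\]
The inner sum is precisely $\zeta(\lambda,-m,k)=-\mathcal{B}_{m+1}^{(k)}(\lambda)/(m+1)$ by (\ref{Apostol-Int}), so combining with the previous display gives
\[
\mathcal{F}(t,k;\lambda)=(-1)^{k+1}2^{k}\sum_{m=0}^{\infty}\frac{\mathcal{B}_{m+1}^{(k)}(\lambda)}{m+1}\frac{u^{m}}{m!}.
\]
To reintroduce $t$ I would invoke the generating function (\ref{Gen-FirstStirling}) with argument $\lambda t$ in place of $t$, namely
\[
\frac{u^{m}}{m!}=\frac{\left[\log(1+\lambda t)\right]^{m}}{m!}=\sum_{v=m}^{\infty}S_{1}(v,m)\lambda^{v}\frac{t^{v}}{v!},
\]
substitute it into the previous display, and interchange the two summations to obtain
\[
\mathcal{F}(t,k;\lambda)=(-1)^{k+1}2^{k}\sum_{v=0}^{\infty}\lambda^{v}\left(\sum_{m=0}^{v}S_{1}(v,m)\frac{\mathcal{B}_{m+1}^{(k)}(\lambda)}{m+1}\right)\frac{t^{v}}{v!},
\]
where the lower limit $m=0$ uses $S_{1}(v,m)=0$ for $v<m$. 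Comparing the coefficient of $t^{v}/v!$ with $\mathcal{F}(t,k;\lambda)=\sum_{v}Y_{v}^{(k)}(\lambda)t^{v}/v!$ from (\ref{GenFHigOrdY}) yields the assertion.

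I expect the main difficulty to be bookkeeping rather than conceptual: correctly carrying the sign $(-1)^{k+1}$ and the factor $\lambda^{v}$ (which arises solely from replacing $t$ by $\lambda t$ in the Stirling generating function), and justifying the two interchanges of summation (the $m$- with the $n$-sum, and the $m$- with the $v$-sum) in the common region of convergence. As an independent check one can confront the claim with the closed form (\ref{ExplFormulaYk}): substituting (\ref{Apostol-Int}) turns the right-hand side into $(-1)^{k}2^{k}\lambda^{v}\sum_{n}\binom{n+k-1}{n}\lambda^{n}(n)_{v}$, and since $(n)_{v}=\sum_{m}S_{1}(v,m)n^{m}$ vanishes for $n<v$, the shift $n=v+j$ together with $\sum_{j}\binom{v+k-1+j}{j}\lambda^{j}=(1-\lambda)^{-(v+k)}$ recovers exactly $(-1)^{v}\binom{k+v-1}{v}2^{k}v!\lambda^{2v}(\lambda-1)^{-(k+v)}$, confirming the identity.
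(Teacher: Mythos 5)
Your proof is correct and follows essentially the same route as the paper's: writing $1+\lambda t=e^{\log(1+\lambda t)}$, applying the negative binomial expansion, expanding the exponential, and invoking the Stirling-number generating function (\ref{Gen-FirstStirling}) together with the interpolation formula (\ref{Apostol-Int}). The only difference is cosmetic ordering — you apply (\ref{Apostol-Int}) before returning to the variable $t$, whereas the paper extracts coefficients first and applies (\ref{Apostol-Int}) last — and your closing consistency check against (\ref{ExplFormulaYk}) is a nice bonus but not a departure in method.
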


\begin{proof}
	Replacing $1+\lambda t$ by $e^{\log (1+\lambda t)}$ in (\ref{GenFHigOrdY}),
	for $\left\vert \lambda e^{\log (1+\lambda t)}\right\vert <1$, we have%
	\begin{eqnarray*}
		\sum_{v=0}^{\infty }Y_{v}^{\left( k\right) }\left( \lambda \right) \frac{%
			t^{v}}{v!} &=&\frac{2^{k}}{\left( \lambda e^{log(1+\lambda t)}-1\right) ^{k}}
		\\
		&=&\left( -1\right) ^{k}2^{k}\sum_{n=0}^{\infty }\left( 
		\begin{array}{c}
			n+k-1 \\ 
			n%
		\end{array}%
		\right) \lambda ^{n}e^{n\log (1+\lambda t)} \\
		&=&\left( -1\right) ^{k}2^{k}\sum_{n=0}^{\infty }\left( 
		\begin{array}{c}
			n+k-1 \\ 
			n%
		\end{array}%
		\right) \lambda ^{n}\sum_{m=0}^{\infty }\frac{\left[ n\log (1+\lambda t)%
			\right] ^{m}}{m!}.
	\end{eqnarray*}%
	Combining (\ref{Gen-FirstStirling}) with the above equation yields:
		\begin{eqnarray*}
			\sum_{v=0}^{\infty }Y_{v}^{\left( k\right) }\left( \lambda \right) \frac{%
				t^{v}}{v!} &=& \left( -1\right) ^{k}2^{k}\sum_{n=0}^{\infty }\left( 
			\begin{array}{c}
				n+k-1 \\ 
				n%
			\end{array}%
			\right) \lambda ^{n}\sum_{m=0}^{\infty }n^{m}\sum_{v=0}^{\infty }S_{1}\left(
			v,m\right) \frac{(\lambda t)^{v}}{v!}\\
			&=& \left( -1\right) ^{k}2^{k}\sum_{v=0}^{\infty }\left( \sum_{m=0}^{\infty
			}S_{1}\left( v,m\right) \sum_{n=0}^{\infty }\left( 
			\begin{array}{c}
				n+k-1 \\ 
				n%
			\end{array}%
			\right) \lambda ^{n}n^{m}\right) \frac{(\lambda t)^{v}}{v!}.
		\end{eqnarray*}%
	After equalizing the coefficients of the variable $\frac{t^{v}}{v!}$ in the
	previous equation with the necessary calculations yields:%
	\begin{equation}
	Y_{v}^{\left( k\right) }\left( \lambda \right) =\left( -1\right)
	^{k}2^{k}\lambda ^{v}\sum_{m=0}^{\infty }S_{1}\left( v,m\right)
	\sum_{n=0}^{\infty }\left( 
	\begin{array}{c}
	n+k-1 \\ 
	n%
	\end{array}%
	\right) \lambda ^{n}n^{m}.  \label{New-4}
	\end{equation}%
	Combining (\ref{Apostol-Int}) with (\ref{New-4}) yields the assertion of the
	theorem.
\end{proof}

\begin{rem}
	Observe that when $k=1$, the above equation reduces to the Theorem 9 in \cite%
	{KucukogluJNT2017}.
\end{rem}

\begin{thm}
	\[
	Y_{m}^{\left( k\right) }\left( -\lambda \right) =\left( -1\right)
	^{m+k}\lambda ^{m}\sum_{n=0}^{m}\mathcal{E}_{n}^{\left( k\right) }\left(
	\lambda \right) S_{1}\left( m,n\right) . 
	\]
\end{thm}

\begin{proof}
	When we replace $1-\lambda t$ by $e^{\log (1-\lambda t)}$ in (\ref%
	{GenFHigOrdY}), for $\left\vert \lambda e^{\log (1-\lambda t)}\right\vert <1$%
	, we have%
	\begin{eqnarray*}
		\sum_{m=0}^{\infty }Y_{m}^{\left( k\right) }\left( -\lambda \right) \frac{%
			t^{m}}{m!} &=&\frac{2^{k}\left( -1\right) ^{k}}{\left( \lambda
			e^{log(1-\lambda t)}+1\right) ^{k}} \\
		&=&\left( -1\right) ^{k}\sum_{n=0}^{\infty }\mathcal{E}_{n}^{\left( k\right)
		}\left( \lambda \right) \frac{\left[ \log (1-\lambda t)\right] ^{n}}{n!}.
	\end{eqnarray*}%
	Combining (\ref{Gen-FirstStirling}) with the above equation yields:
	\begin{eqnarray*}
		\sum_{m=0}^{\infty }Y_{m}^{\left( k\right) }\left( -\lambda \right) \frac{%
			t^{m}}{m!} &=&\left( -1\right) ^{k}\sum_{m=0}^{\infty }\left( \sum_{n=0}^{m}\left(
		-\lambda \right) ^{m}\mathcal{E}_{n}^{\left( k\right) }\left( \lambda
		\right) S_{1}\left( m,n\right) \right) \frac{t^{m}}{m!}.
	\end{eqnarray*}
	After equalizing the coefficients of the variable $\frac{t^{m}}{m!}$ in the
	previous equation with the necessary calculations yields the assertion of
	the theorem.
\end{proof}

\begin{rem}
	Observe that when $k=1$, the above equation reduces to the Theorem 10 in 
	\cite{KucukogluJNT2017}.
\end{rem}

\section{Relations arising from derivatives of the functions\ $\mathcal{F}%
	\left( t,k;\protect\lambda \right) $}

In this section, we compute some derivative formulas and recurrence formulas
for the numbers $Y_{n}^{\left( k\right) }\left( \lambda \right) $.

\begin{thm}
	\[
	Y_{n+v}^{\left( k\right) }\left( \lambda \right) =\frac{\left( -1\right)
		^{v}\left( k\right) ^{\left( v\right) }\lambda {^{2v}}}{2^{v}}Y_{n}^{\left(
		k+v\right) }\left( \lambda \right) . 
	\]
	where ${\left( x\right) }^{\left(n\right)}=x\left( x+1\right) \left( x+2\right) \ldots
	\left( x+n-1\right) $.
\end{thm}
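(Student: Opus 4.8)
The plan is to exploit a simple first-order differential relation among the generating functions $\mathcal{F}(t,k;\lambda)$ for consecutive orders $k$, and then to iterate it $v$ times. Writing $g(t)=\lambda(1+\lambda t)-1=\lambda-1+\lambda^{2}t$, so that $g'(t)=\lambda^{2}$, the generating function (\ref{GenFHigOrdY}) takes the form $\mathcal{F}(t,k;\lambda)=2^{k}g(t)^{-k}$. Differentiating once with respect to $t$ and recognizing that $g(t)^{-(k+1)}=2^{-(k+1)}\mathcal{F}(t,k+1;\lambda)$ by (\ref{GenFHigOrdY}) at order $k+1$, I obtain
\[
\frac{\partial}{\partial t}\mathcal{F}(t,k;\lambda)=-\frac{k\lambda^{2}}{2}\mathcal{F}(t,k+1;\lambda).
\]
This single identity is the engine of the proof; everything else is bookkeeping.

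Next I would iterate this relation $v$ times. A straightforward induction on $v$ shows that each application of $\partial/\partial t$ raises the order by one and multiplies by $-\lambda^{2}/2$ times the order present just before differentiation, so that after $v$ steps the accumulated scalar is $(-1)^{v}\lambda^{2v}2^{-v}$ times the product $k(k+1)\cdots(k+v-1)=(k)^{(v)}$. Concretely,
\[
\frac{\partial^{v}}{\partial t^{v}}\mathcal{F}(t,k;\lambda)=\frac{(-1)^{v}(k)^{(v)}\lambda^{2v}}{2^{v}}\mathcal{F}(t,k+v;\lambda).
\]
The inductive step merely reuses the first-order relation with $k$ replaced by the current order $k+j-1$; the only point requiring attention is checking that the product of the successive orders collapses exactly to the rising factorial $(k)^{(v)}$, which is immediate from its definition given in the statement.

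Finally I would extract the coefficients from both sides. On the left, differentiating the series $\sum_{m\ge 0}Y_{m}^{(k)}(\lambda)t^{m}/m!$ termwise $v$ times and reindexing by $n=m-v$ gives $\sum_{n\ge 0}Y_{n+v}^{(k)}(\lambda)t^{n}/n!$. On the right, substituting the series (\ref{GenFHigOrdY}) for $\mathcal{F}(t,k+v;\lambda)$ produces $\frac{(-1)^{v}(k)^{(v)}\lambda^{2v}}{2^{v}}\sum_{n\ge 0}Y_{n}^{(k+v)}(\lambda)t^{n}/n!$. Comparing the coefficients of $t^{n}/n!$ then yields the claimed identity. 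I do not expect any serious obstacle; the only genuine care needed is the index shift after repeated differentiation and the verification that the iterated constants assemble into $(k)^{(v)}$.
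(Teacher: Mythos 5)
Your proposal is correct and follows essentially the same route as the paper: the key first-order relation $\frac{d}{dt}\mathcal{F}(t,k;\lambda)=-\frac{k}{2}\lambda^{2}\mathcal{F}(t,k+1;\lambda)$, its $v$-fold iteration producing the factor $\frac{(-1)^{v}(k)^{(v)}\lambda^{2v}}{2^{v}}\mathcal{F}(t,k+v;\lambda)$, and comparison of coefficients of $t^{n}/n!$ via (\ref{GenFHigOrdY}). Your write-up merely makes explicit the bookkeeping (the function $g(t)$ and the index shift after differentiation) that the paper leaves implicit.
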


\begin{proof}
	Differentiating the functions $\mathcal{F}\left( t,k;\lambda \right) $ with
	respect to variable $t$, we obtain the following derivative formulas:%
\[
\frac{d}{dt}\left\{ \mathcal{F}\left( t,k;\lambda \right) \right\} =-\frac{k%
}{2}\lambda {^{2}}\mathcal{F}\left( t,k+1;\lambda \right) . 
\]%
Therefore, iterating the above derivation $v$ times for the variable $t$
yields the following partial differential equation:%
\[
\frac{d^{v}}{dt^{v}}\left\{ \mathcal{F}\left( t,k;\lambda \right) \right\} =%
\frac{\left( -1\right) ^{v}\left( k\right) ^{\left( v\right) }\lambda {^{2v}}%
}{2^{v}}\mathcal{F}\left( t,k+v;\lambda \right) 
\]%
Combining (\ref{GenFHigOrdY}) with the above differential equation yields
the assertion of the theorem.
\end{proof}
\begin{thm}
	\[
	\frac{d}{d\lambda }Y_{n}^{\left( k\right) }\left( \lambda \right) =-\frac{k}{%
		2}\left( 2\lambda nY_{n-1}^{\left( k+1\right) }\left( \lambda \right)
	+Y_{n}^{\left( k+1\right) }\left( \lambda \right) \right) . 
	\]
\end{thm}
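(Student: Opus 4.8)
The plan is to mimic the approach of the preceding theorem, but to differentiate the generating function $\mathcal{F}\left( t,k;\lambda \right)$ in (\ref{GenFHigOrdY}) with respect to the parameter $\lambda$ rather than the variable $t$. Writing $g\left( t,\lambda \right)=\lambda \left( 1+\lambda t\right)-1=\lambda ^{2}t+\lambda -1$ so that $\mathcal{F}\left( t,k;\lambda \right)=2^{k}g^{-k}$, I would first record the elementary derivative $\partial g/\partial \lambda =1+2\lambda t$ and then apply the chain rule to obtain
\[
\frac{\partial}{\partial \lambda}\mathcal{F}\left( t,k;\lambda \right)=-k\,2^{k}\,g^{-k-1}\left( 1+2\lambda t\right).
\]
The key observation is that $2^{k}g^{-k-1}=\tfrac{1}{2}\,2^{k+1}g^{-(k+1)}=\tfrac{1}{2}\mathcal{F}\left( t,k+1;\lambda \right)$, so the derivative equation takes the compact form
\[
\frac{\partial}{\partial \lambda}\mathcal{F}\left( t,k;\lambda \right)=-\frac{k}{2}\left( 1+2\lambda t\right)\mathcal{F}\left( t,k+1;\lambda \right),
\]
which is the exact analogue, in $\lambda$, of the derivative formula in $t$ used in the previous theorem.

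Next I would expand both sides as power series in $t$. On the left, differentiating the series (\ref{GenFHigOrdY}) term by term gives $\sum_{n\geq 0}\frac{d}{d\lambda}Y_{n}^{\left( k\right) }\left( \lambda \right)\,t^{n}/n!$. On the right, I would substitute the series (\ref{GenFHigOrdY}) for $\mathcal{F}\left( t,k+1;\lambda \right)$ and split the factor $1+2\lambda t$ into two pieces. The term coming from $1$ contributes $-\frac{k}{2}Y_{n}^{\left( k+1\right) }\left( \lambda \right)$ to the coefficient of $t^{n}/n!$, while the term coming from $2\lambda t$ shifts the summation index: re-indexing $\sum_{m\geq 0}Y_{m}^{\left( k+1\right) }\left( \lambda \right)\,t^{m+1}/m!$ by $n=m+1$ and using $t^{n}/(n-1)!=n\,t^{n}/n!$ produces the factor $n$ and contributes $-k\lambda n\,Y_{n-1}^{\left( k+1\right) }\left( \lambda \right)$.

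Finally, comparing the coefficients of $t^{n}/n!$ on the two sides and factoring out $-\tfrac{k}{2}$ yields
\[
\frac{d}{d\lambda}Y_{n}^{\left( k\right) }\left( \lambda \right)=-\frac{k}{2}\left( 2\lambda n\,Y_{n-1}^{\left( k+1\right) }\left( \lambda \right)+Y_{n}^{\left( k+1\right) }\left( \lambda \right)\right),
\]
which is the assertion. No step presents a genuine obstacle; the only point requiring care is the bookkeeping in the index shift, where one must correctly convert $t^{n}/(n-1)!$ into $n\,t^{n}/n!$ so that the factor $n$ appears in front of $Y_{n-1}^{\left( k+1\right) }$.
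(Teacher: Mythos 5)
Your proposal is correct and follows essentially the same route as the paper: both derive the derivative equation $\frac{\partial}{\partial \lambda}\mathcal{F}\left( t,k;\lambda \right)=-\frac{k}{2}\left( 2\lambda t+1\right) \mathcal{F}\left( t,k+1;\lambda \right)$, expand the right-hand side as a power series, re-index the $2\lambda t$ term to produce the factor $n$, and compare coefficients of $\frac{t^{n}}{n!}$. The only difference is that you spell out the chain-rule computation behind the derivative equation, which the paper states without detail.
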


\begin{proof}
	Differentiating the functions\ $\mathcal{F}\left( t,k;\lambda \right) $ with
	respect to variable $\lambda $, we obtain the following derivative formula:
\[
\frac{d}{d\lambda }\left\{ \mathcal{F}\left( t,k;\lambda \right) \right\} =-%
\frac{k}{2}\left( 2\lambda t+1\right) \mathcal{F}\left( t,k+1;\lambda
\right) . 
\]%
From (\ref{GenFHigOrdY}), we thus have%
\begin{eqnarray*}
	\sum_{n=0}^{\infty }\frac{d}{d\lambda }Y_{n}^{\left( k\right) }\left(
	\lambda \right) \frac{t^{n}}{n!} &=&-\frac{k}{2}\left( 2\lambda t+1\right)
	\sum_{n=0}^{\infty }Y_{n}^{\left( k+1\right) }\left( \lambda \right) \frac{%
		t^{n}}{n!} \\
	&=&-k\lambda t\sum_{n=0}^{\infty }Y_{n}^{\left( k+1\right) }\left( \lambda
	\right) \frac{t^{n}}{n!}-\frac{k}{2}\sum_{n=0}^{\infty }Y_{n}^{\left(
		k+1\right) }\left( \lambda \right) \frac{t^{n}}{n!} \\
	&=&-k\lambda \sum_{n=0}^{\infty }nY_{n-1}^{\left( k+1\right) }\left( \lambda
	\right) \frac{t^{n}}{n!}-\frac{k}{2}\sum_{n=0}^{\infty }Y_{n}^{\left(
		k+1\right) }\left( \lambda \right) \frac{t^{n}}{n!} \\
	&=&\sum_{n=0}^{\infty }\left( -k\lambda nY_{n-1}^{\left( k+1\right) }\left(
	\lambda \right) -\frac{k}{2}Y_{n}^{\left( k+1\right) }\left( \lambda \right)
	\right) \frac{t^{n}}{n!}.
\end{eqnarray*}%
After equalizing the coefficients of the variable $\frac{t^{n}}{n!}$ in the
previous equation with the necessary calculations yields the assertion of
the theorem.
\end{proof}

\section{Vandermonde type convolution formula arising from functional
	equations of the generating function for the numbers $Y_{n}^{\left( k\right)
	}\left( \protect\lambda \right) $}

In this section, we give some Vandermonde type convolution formulas drived
from functional equations of the function $\mathcal{F}\left( t,k;\lambda
\right) $. We also give some special values of these formulas related to the
Catalan numbers and combinatorial sums.

Let $m\in \mathbb{N}$ and $k_{1},k_{2},\ldots ,k_{m}\in \mathbb{N}$. Using (%
\ref{GenFHigOrdY}) yields the following functional equation:%
\[
\mathcal{F}\left( t,k_{1}+k_{2}+\cdots +k_{m};\lambda \right) =\mathcal{F}%
\left( t,k_{1};\lambda \right) \mathcal{F}\left( t,k_{2};\lambda \right)
\ldots \mathcal{F}\left( t,k_{m};\lambda \right) . 
\]%
By using the above functional equation, we get%
\[
\sum_{n=0}^{\infty }Y_{n}^{\left( k_{1}+k_{2}+\cdots +k_{m}\right) }\left(
\lambda \right) \frac{t^{n}}{n!}=\left( \sum_{n=0}^{\infty }Y_{n}^{\left(
	k_{1}\right) }\left( \lambda \right) \frac{t^{n}}{n!}\right) \cdots \left(
\sum_{n=0}^{\infty }Y_{n}^{\left( k_{m}\right) }\left( \lambda \right) \frac{%
	t^{n}}{n!}\right) . 
\]%
Using the Cauchy product\ in the above equation, we obtain%
\begin{eqnarray*}
	\sum_{n=0}^{\infty }Y_{n}^{\left( k_{1}+k_{2}+\cdots +k_{m}\right) }\left(
	\lambda \right) \frac{t^{n}}{n!}
	&=&\sum_{v_{1}+v_{2}+\cdots +v_{m-1}=n}\frac{Y_{v_{m-1}}^{\left(
			k_{m}\right) }\left( \lambda \right) }{\left( v_{m-1}\right) !}
	 \frac{Y_{v_{m-2}}^{\left( k_{m-1}\right) }\left( \lambda \right) }{\left(
		v_{m-2}\right) !}\cdots
	\\ 	&& \times \frac{Y_{v_{1}}^{\left( k_{1}\right) }\left( \lambda
		\right) }{v_{1}!}  \frac{Y_{n-v_{1}-v_{2}-\cdots -v_{m-1}}^{\left(
			k_{2}\right) }\left( \lambda \right) }{\left( n-v_{1}-v_{2}-\cdots
		-v_{m-1}\right) !}t^{n}.
\end{eqnarray*}%
where 
\[
\sum_{v_{1}+v_{2}+\cdots +v_{m-1}=n} 
\]%
denotes%
\[
\sum_{v_{m-1}=0}^{n}\sum_{v_{m-2}=0}^{n-v_{m-1}}\cdots
\sum_{v_{1}=0}^{n-v_{2}-v_{3}-\cdots -v_{m-1}}. 
\]%
After equalizing the coefficients of the variable $t^{n}$ in the previous
equation with the necessary calculations yields the following theorem:

\begin{thm}
	\label{Theorem-Multi}%
	Let $m\in \mathbb{N}$, $k_{1},k_{2},\ldots ,k_{m}\in \mathbb{N}$ and $n\in \mathbb{N}_0$. Then we have 
	\begin{eqnarray*}
		&&Y_{n}^{\left( k_{1}+k_{2}+\cdots +k_{m}\right) }\left( \lambda \right) \\
		&=&\sum_{v_{1}+v_{2}+\cdots +v_{m-1}=n}\mathcal{C}_{v_{1},v_{2},\ldots
			,v_{m-1}}^{n}Y_{v_{m-1}}^{\left( k_{m}\right) }\left( \lambda \right)
		Y_{v_{m-2}}^{\left( k_{m-1}\right) }\left( \lambda \right) \cdots
		Y_{v_{1}}^{\left( k_{1}\right) }\left( \lambda \right) \\
		&&\times Y_{n-v_{1}-v_{2}-\cdots -v_{m-1}}^{\left( k_{2}\right) }\left(
		\lambda \right),
	\end{eqnarray*}%
	where%
	\begin{eqnarray*}
		\mathcal{C}_{v_{1},v_{2},\ldots ,v_{m-1}}^{n} &=&\left( 
		\begin{array}{c}
			n \\ 
			v_{1},v_{2},\ldots ,n-v_{1}-\cdots -v_{m-1}%
		\end{array}%
		\right) \\
		&=&\frac{n!}{v_{1}!v_{2}!\ldots \left( n-v_{1}-\cdots -v_{m-1}\right) !}.
	\end{eqnarray*}
\end{thm}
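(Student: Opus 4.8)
The plan is to read the identity directly off the multiplicative functional equation obeyed by $\mathcal{F}(t,k;\lambda)$, which is already stated just before the theorem. First I would note that the dependence of (\ref{GenFHigOrdY}) on $k$ is only through an exponent: since
\[
\mathcal{F}(t,k;\lambda)=\left(\frac{2}{\lambda(1+\lambda t)-1}\right)^{k},
\]
exponents add, so that for every $m\in\mathbb{N}$ and $k_1,\ldots,k_m\in\mathbb{N}$,
\[
\mathcal{F}(t,k_1+k_2+\cdots+k_m;\lambda)=\prod_{i=1}^{m}\mathcal{F}(t,k_i;\lambda).
\]
This is the only structural input required, and it is immediate; the formal coefficient manipulations that follow are valid on the common domain $\left|\frac{\lambda^{2}}{\lambda-1}t\right|<1$ already invoked in (\ref{NewtonBinomWay1}).

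Next I would substitute the series definition (\ref{GenFHigOrdY}) into each of the $m$ factors on the right, writing
\[
\sum_{n=0}^{\infty}Y_{n}^{(k_1+\cdots+k_m)}(\lambda)\frac{t^{n}}{n!}=\prod_{i=1}^{m}\left(\sum_{n=0}^{\infty}Y_{n}^{(k_i)}(\lambda)\frac{t^{n}}{n!}\right),
\]
and then expand the product by the iterated Cauchy product rule. Introducing indices $v_1,\ldots,v_{m-1}$ for the first $m-1$ factors and assigning the complement $n-v_1-\cdots-v_{m-1}$ to the remaining factor, the coefficient of $t^{n}$ collects every way of distributing total degree $n$ across the $m$ series; this yields exactly the nested summation $\sum_{v_{m-1}=0}^{n}\sum_{v_{m-2}=0}^{n-v_{m-1}}\cdots\sum_{v_{1}=0}^{n-v_{2}-\cdots-v_{m-1}}$ abbreviated by $\sum_{v_1+\cdots+v_{m-1}=n}$. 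The reciprocal factorials $1/(v_j!)$ carried by each term, after multiplication by $n!$ to match the normalization $t^{n}/n!$, reassemble into the multinomial weight $\mathcal{C}^{n}_{v_1,\ldots,v_{m-1}}$, and equating coefficients of $t^{n}/n!$ gives the stated formula.

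The step that needs the most care — and which I expect to be the main obstacle — is the bookkeeping of the $m$-fold convolution: one must check that iterating the two-factor Cauchy product $m-1$ times produces a single multinomial coefficient rather than a nested product of binomials, and that the index ranges collapse correctly to the simplex $v_1+\cdots+v_{m-1}\le n$. I would handle this cleanly by induction on $m$. The base case $m=1$ is the definition, and in the inductive step I would pair $\mathcal{F}(t,k_1+\cdots+k_{m-1};\lambda)$ with $\mathcal{F}(t,k_m;\lambda)$, applying the ordinary two-series Cauchy product and using the multinomial recursion
\[
\mathcal{C}^{n}_{v_1,\ldots,v_{m-1}}=\left(\begin{array}{c}n\\v_{m-1}\end{array}\right)\mathcal{C}^{\,n-v_{m-1}}_{v_1,\ldots,v_{m-2}}
\]
to absorb the newly introduced outer index $v_{m-1}$ (attached to the $Y^{(k_m)}$ factor) into the coefficient inherited from the $(m-1)$-fold case. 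Tracking the factorials through this recursion is routine once the recursion is in place, so the entire argument reduces to the functional equation together with this single combinatorial identity.
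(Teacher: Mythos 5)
Your proposal is correct and follows essentially the same route as the paper: the functional equation $\mathcal{F}(t,k_1+\cdots+k_m;\lambda)=\prod_{i=1}^{m}\mathcal{F}(t,k_i;\lambda)$, substitution of the series (\ref{GenFHigOrdY}), an $m$-fold Cauchy product, and comparison of coefficients of $t^n$. Your organization of the convolution bookkeeping as an induction on $m$ via the recursion $\mathcal{C}^{n}_{v_1,\ldots,v_{m-1}}=\binom{n}{v_{m-1}}\mathcal{C}^{\,n-v_{m-1}}_{v_1,\ldots,v_{m-2}}$ is merely a tidier presentation of the same computation the paper carries out directly.
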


\begin{rem}
	Substituting $m=2$ into Theorem \ref{Theorem-Multi}, we have%
	\begin{equation}
	Y_{n}^{\left( k_{1}+k_{2}\right) }\left( \lambda \right)
	=\sum_{v_{1}=0}^{n}\left( 
	\begin{array}{c}
	n \\ 
	v_{1}%
	\end{array}%
	\right) Y_{v_{1}}^{\left( k_{1}\right) }\left( \lambda \right)
	Y_{n-v_{1}}^{\left( k_{2}\right) }\left( \lambda \right) .
	\label{Calc-HigherOrderYk}
	\end{equation}
\end{rem}

\begin{rem}
	Substituting $m=3$ into Theorem \ref{Theorem-Multi}, we have%
	\begin{eqnarray*}
	Y_{n}^{\left( k_{1}+k_{2}+k_{3}\right) }\left( \lambda \right)
	&=&\sum_{v_{2}=0}^{n}\sum_{v_{1}=0}^{n-v_{2}}\left( 
	\begin{array}{c}
		n \\ 
		v_{2}%
	\end{array}%
	\right) \left( 
	\begin{array}{c}
		n-v_{2} \\ 
		v_{1}%
	\end{array}%
	\right) Y_{v_{1}}^{\left( k_{1}\right) }\left( \lambda \right)
	\\
	&& \times
	Y_{n-v_{1}-v_{2}}^{\left( k_{2}\right) }\left( \lambda \right)
	Y_{v_{2}}^{\left( k_{3}\right) }\left( \lambda \right) . 
	\end{eqnarray*}
\end{rem}

We now give the Vandermonde type convolution formula by the following
theorem with the help of the explicit formula for the numbers $Y_{n}^{\left(
	k\right) }\left( \lambda \right) $.

\begin{thm}
	\label{Theorem-MultiVanderType}Let $m\in \mathbb{N}$, $k_{1},k_{2},\ldots ,k_{m}\in \mathbb{N}$ and $n\in \mathbb{N}_0$. Then we have 
	\begin{eqnarray*}
		&&\left( 
		\begin{array}{c}
			k_{1}+k_{2}+\cdots +k_{m}+n-1 \\ 
			n%
		\end{array}%
		\right) \\
		&=&\sum_{v_{1}+v_{2}+\cdots +v_{m-1}=n}\left( 
		\begin{array}{c}
			k_{m}+v_{m-1}-1 \\ 
			v_{m-1}%
		\end{array}%
		\right) \left( 
		\begin{array}{c}
			k_{m-1}+v_{m-2}-1 \\ 
			v_{m-2}%
		\end{array}%
		\right) \cdots \\
		&&\times \left( 
		\begin{array}{c}
			k_{1}+v_{1}-1 \\ 
			v_{1}%
		\end{array}%
		\right) \left( 
		\begin{array}{c}
			k_{2}+n-v_{1}-v_{2}-\cdots -v_{m-1}-1 \\ 
			n-v_{1}-v_{2}-\cdots -v_{m-1}%
		\end{array}%
		\right) .
	\end{eqnarray*}
\end{thm}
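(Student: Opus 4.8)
The plan is to deduce this convolution identity directly from Theorem~\ref{Theorem-Multi} by substituting the closed form of the numbers $Y_{n}^{(k)}(\lambda)$ provided by the explicit formula~(\ref{ExplFormulaYk}). Since the identity in Theorem~\ref{Theorem-Multi} holds for every admissible $\lambda$, once I replace each $Y$-value by its explicit expression and divide out the common $\lambda$-dependent factor, the residual equality will no longer involve $\lambda$ and will therefore be precisely the asserted combinatorial identity in $k_{1},\ldots,k_{m}$ and $n$.

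First I would set $K:=k_{1}+k_{2}+\cdots+k_{m}$ and record, via (\ref{ExplFormulaYk}), the left-hand side of Theorem~\ref{Theorem-Multi} in the form
\[
Y_{n}^{(K)}(\lambda)=(-1)^{n}\binom{K+n-1}{n}\frac{2^{K}\,n!\,\lambda^{2n}}{(\lambda-1)^{K+n}}.
\]
Next I would expand every factor $Y_{v_{i}}^{(k_{i})}(\lambda)$ appearing on the right-hand side with the same formula, so that a generic summand indexed by a composition $v_{1}+\cdots+v_{m-1}=n$ (writing $v_{m}:=n-v_{1}-\cdots-v_{m-1}$ for the remaining index attached to the order $k_{2}$) becomes a product of $m$ such expressions multiplied by the multinomial weight $\mathcal{C}^{n}_{v_{1},\ldots,v_{m-1}}$.

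The decisive step is to track how each of the five ingredients of (\ref{ExplFormulaYk}) multiplies across the $m$ factors. Because $\sum_{i}v_{i}=n$ and $\sum_{i}k_{i}=K$, the signs collapse to $(-1)^{n}$, the powers of $2$ to $2^{K}$, the powers of $\lambda$ to $\lambda^{2n}$, and the powers of $(\lambda-1)$ to $(\lambda-1)^{-(K+n)}$; simultaneously the product $\prod_{i=1}^{m}v_{i}!$ of the factorials cancels exactly against the denominator $v_{1}!\cdots v_{m}!$ of $\mathcal{C}^{n}_{v_{1},\ldots,v_{m-1}}=n!/(v_{1}!\cdots v_{m}!)$, leaving a single $n!$. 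Thus the common prefactor $(-1)^{n}2^{K}n!\,\lambda^{2n}(\lambda-1)^{-(K+n)}$ factors out of both sides and may be divided away, after which only the binomial coefficient $\binom{K+n-1}{n}$ remains on the left and only the product $\prod_{i}\binom{k_{i}+v_{i}-1}{v_{i}}$ survives in each summand on the right. This is exactly the claimed Vandermonde-type formula.

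I expect the main obstacle to be organisational rather than conceptual: one must keep careful account of the matching between the summation indices $v_{i}$ and the superscripts $k_{i}$ as arranged in Theorem~\ref{Theorem-Multi} (in particular that the final factor carries order $k_{2}$ and index $n-v_{1}-\cdots-v_{m-1}$), and verify that the multinomial coefficient annihilates the product of factorials with no residual term. Once this single cancellation is confirmed, no further analysis is needed and the identity follows.
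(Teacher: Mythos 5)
Your proof is correct and takes essentially the same approach as the paper: the paper's own proof of Theorem \ref{Theorem-MultiVanderType} consists of the single remark that combining (\ref{ExplFormulaYk}) with Theorem \ref{Theorem-Multi} yields the result, and the details it omits are exactly the cancellations you carry out (signs collapsing to $(-1)^{n}$, powers of $2$, $\lambda$, and $(\lambda-1)$ aggregating via $\sum_{i}v_{i}=n$ and $\sum_{i}k_{i}=K$, and the factorials cancelling against the multinomial coefficient). Your bookkeeping also matches the computation the paper does display explicitly in the $m=2$ case, i.e.\ the proof of the corollary giving (\ref{MultiVander2}).
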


\begin{proof}
	By combining (\ref{ExplFormulaYk}) with Theorem \ref{Theorem-Multi} yields
	the Vandermonde type convolution formula. So we omit the detail of proof.
\end{proof}

We are ready to give some special cases of Theorem \ref%
{Theorem-MultiVanderType} by the following corollaries:

Substituting $m=2$ into Theorem \ref{Theorem-MultiVanderType}, we get the
following corollary:

\begin{cor}
	Let $k_{1},k_{2}\in \mathbb{N}$ and $n\in \mathbb{N}_0$. Then we have 
	\begin{equation}
	\left( 
	\begin{array}{c}
	k_{1}+k_{2}+n-1 \\ 
	n%
	\end{array}%
	\right) =\sum_{v_{1}=0}^{n}\left( 
	\begin{array}{c}
	k_{1}+v_{1}-1 \\ 
	v_{1}%
	\end{array}%
	\right) \left( 
	\begin{array}{c}
	k_{2}+n-v_{1}-1 \\ 
	n-v_{1}%
	\end{array}%
	\right) .  \label{MultiVander2}
	\end{equation}
\end{cor}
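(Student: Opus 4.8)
The plan is to obtain (\ref{MultiVander2}) as the two-block specialization $m=2$ of Theorem \ref{Theorem-MultiVanderType}. Setting $m=2$ leaves a single summation index $v_{1}$, so the composite summation $\sum_{v_{1}+\cdots +v_{m-1}=n}$ collapses to the ordinary sum $\sum_{v_{1}=0}^{n}$; the chain of generalized binomial coefficients $\binom{k_{m}+v_{m-1}-1}{v_{m-1}}\cdots \binom{k_{1}+v_{1}-1}{v_{1}}$ reduces to the single factor $\binom{k_{1}+v_{1}-1}{v_{1}}$, while the trailing factor becomes $\binom{k_{2}+n-v_{1}-1}{n-v_{1}}$ and the left-hand side becomes $\binom{k_{1}+k_{2}+n-1}{n}$. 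Reading off these reductions yields (\ref{MultiVander2}) at once, so in the finished write-up I would simply record this substitution.

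For an independent check I would also derive (\ref{MultiVander2}) directly from the $m=2$ convolution (\ref{Calc-HigherOrderYk}) together with the explicit formula (\ref{ExplFormulaYk}); this is in fact the mechanism behind Theorem \ref{Theorem-MultiVanderType} itself. First I would write $Y_{n}^{\left( k_{1}+k_{2}\right) }\left( \lambda \right) =\sum_{v_{1}=0}^{n}\binom{n}{v_{1}}Y_{v_{1}}^{\left( k_{1}\right) }\left( \lambda \right) Y_{n-v_{1}}^{\left( k_{2}\right) }\left( \lambda \right)$ and substitute (\ref{ExplFormulaYk}) into every factor $Y$. Then I would track the five kinds of data separately: the signs $(-1)^{v_{1}}(-1)^{n-v_{1}}=(-1)^{n}$, the powers of two $2^{k_{1}}2^{k_{2}}=2^{k_{1}+k_{2}}$, the powers $\lambda ^{2v_{1}}\lambda ^{2(n-v_{1})}=\lambda ^{2n}$, the denominators $(\lambda -1)^{k_{1}+v_{1}}(\lambda -1)^{k_{2}+n-v_{1}}=(\lambda -1)^{k_{1}+k_{2}+n}$, and the factorial data $\binom{n}{v_{1}}v_{1}!(n-v_{1})!=n!$.

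The key observation, and the only point requiring a moment's care, is that each of these factors matches the corresponding factor of $Y_{n}^{\left( k_{1}+k_{2}\right) }\left( \lambda \right) $ read off from (\ref{ExplFormulaYk}), so that the entire prefactor $(-1)^{n}2^{k_{1}+k_{2}}n!\lambda ^{2n}(\lambda -1)^{-(k_{1}+k_{2}+n)}$ is common to both sides. Dividing it out, which is legitimate for $\lambda \neq 0,1$ where it is nonzero, leaves precisely the $\lambda $-free identity (\ref{MultiVander2}); since neither side of (\ref{MultiVander2}) depends on $\lambda $, this restriction is harmless. There is no serious obstacle here, as the whole content lies in this clean cancellation. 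The pleasant feature is that a relation among the $\lambda $-dependent numbers $Y_{n}^{\left( k\right) }\left( \lambda \right) $ distills into a purely combinatorial statement, namely the negative-binomial Vandermonde convolution, which one may equally confirm by comparing the coefficients of $t^{n}$ on the two sides of $(1-t)^{-k_{1}}(1-t)^{-k_{2}}=(1-t)^{-(k_{1}+k_{2})}$.
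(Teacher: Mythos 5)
Your proposal is correct and takes essentially the same route as the paper: the paper also proves (\ref{MultiVander2}) by substituting the explicit formula (\ref{ExplFormulaYk}) into both sides of the $m=2$ convolution (\ref{Calc-HigherOrderYk}) and cancelling the common prefactor $(-1)^{n}2^{k_{1}+k_{2}}n!\,\lambda ^{2n}(\lambda -1)^{-(k_{1}+k_{2}+n)}$. Your added remarks (that the prefactor is nonzero for $\lambda \neq 0,1$ and that the resulting identity is $\lambda$-free, so the restriction is harmless) merely make explicit what the paper compresses into ``after the necessary calculations.''
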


\begin{proof}
	Combining (\ref{Calc-HigherOrderYk}) with (\ref{ExplFormulaYk}) yields%
	\begin{eqnarray}
	&&\sum_{v_{1}=0}^{n}\left( 
	\begin{array}{c}
	n \\ 
	v_{1}%
	\end{array}%
	\right) Y_{v_{1}}^{\left( k_{1}\right) }\left( \lambda \right)
	Y_{n-v_{1}}^{\left( k_{2}\right) }\left( \lambda \right)  \label{Th-2-i} \\
	&=&\left( -1\right) ^{n}n!\left( \frac{2}{\lambda -1}\right)
	^{^{k_{1}+k_{2}}}\left( 
	\begin{array}{c}
	k_{1}+k_{2}+n-1 \\ 
	n%
	\end{array}%
	\right) \left( \frac{\lambda ^{2}}{\lambda -1}\right) ^{n}.  \nonumber
	\end{eqnarray}%
	By combining (\ref{ExplFormulaYk}) with (\ref{Th-2-i}), we obtain%
	\begin{eqnarray*}
		&&\sum_{v_{1}=0}^{n}\left( 
		\begin{array}{c}
			n \\ 
			v_{1}%
		\end{array}%
		\right) \left( \left( -1\right) ^{v_{1}}\left( 
		\begin{array}{c}
			k_{1}+v_{1}-1 \\ 
			v_{1}%
		\end{array}%
		\right) \frac{2^{k_{1}}v_{1}!\lambda ^{2v_{1}}}{\left( \lambda -1\right)
			^{k_{1}+v_{1}}}\right) \\
		&&\times \left( \left( -1\right) ^{n-v_{1}}\left( 
		\begin{array}{c}
			k_{2}+n-v_{1}-1 \\ 
			n-v_{1}%
		\end{array}%
		\right) \frac{2^{k_{2}}\left( n-v_{1}\right) !\lambda ^{2\left(
				n-v_{1}\right) }}{\left( \lambda -1\right) ^{k_{2}+n-v_{1}}}\right) \\
		&=&\left( -1\right) ^{n}n!\left( \frac{2}{\lambda -1}\right)
		^{^{k_{1}+k_{2}}}\left( 
		\begin{array}{c}
			k_{1}+k_{2}+n-1 \\ 
			n%
		\end{array}%
		\right) \left( \frac{\lambda ^{2}}{\lambda -1}\right) ^{n}.
	\end{eqnarray*}%
	After the necessary calculations in the previous equation yields the desired
	result.
\end{proof}

\begin{rem}
	Substituting $k_{1}=a+1$ and $k_{2}=r+1$ into (\ref{MultiVander2}) yields
	Eq-(1.78) in \cite{GouldVOL4}, substituting $k_{1}=k_{2}=r+1$ into (\ref%
	{MultiVander2}) yields Eq-(1.79) in \cite{GouldVOL4}, and also substituting $%
	k_{1}=r+1$ and $k_{2}=n+r+1$ into (\ref{MultiVander2}) yields Eq-(1.82) in 
	\cite{GouldVOL4}.
\end{rem}

Substituting $m=3$ into Theorem \ref{Theorem-MultiVanderType}, we obtain the
following corollary:

\begin{cor}
	Let $k_{1},k_{2},k_{3}\in \mathbb{N}$ and let $n\in \mathbb{N}_0$. Then we have 
	\begin{eqnarray*}
		&&\left( 
		\begin{array}{c}
			k_{1}+k_{2}+k_{3}+n-1 \\ 
			n%
		\end{array}%
		\right) \\
		&=&\sum_{v_{2}=0}^{n}\sum_{v_{1}=0}^{n-v_{2}}\left( 
		\begin{array}{c}
			k_{1}+v_{1}-1 \\ 
			v_{1}%
		\end{array}%
		\right) \left( 
		\begin{array}{c}
			k_{2}+n-v_{1}-v_{2}-1 \\ 
			n-v_{1}-v_{2}%
		\end{array}%
		\right) \left( 
		\begin{array}{c}
			k_{3}+v_{2}-1 \\ 
			v_{2}%
		\end{array}%
		\right) .
	\end{eqnarray*}
\end{cor}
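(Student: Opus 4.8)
The plan is to derive this identity exactly as the $m=3$ instance of Theorem \ref{Theorem-MultiVanderType}; indeed, writing out that theorem with $m=3$, so that $v_{m-1}=v_2$ and $v_{m-2}=v_1$ and the constraint $v_1+v_2+\cdots+v_{m-1}=n$ becomes the double sum $\sum_{v_2=0}^{n}\sum_{v_1=0}^{n-v_2}$, reproduces the stated formula verbatim (up to the harmless reordering of the three binomial factors). In principle the proof is then the single sentence ``set $m=3$ in Theorem \ref{Theorem-MultiVanderType}.'' To make the cancellations transparent, however, I would instead give a self-contained computation in the style of the $m=2$ corollary.

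First I would take the $m=3$ instance of Theorem \ref{Theorem-Multi} recorded in the preceding remark,
\[
Y_{n}^{\left( k_{1}+k_{2}+k_{3}\right) }\left( \lambda \right) =\sum_{v_{2}=0}^{n}\sum_{v_{1}=0}^{n-v_{2}}\binom{n}{v_2}\binom{n-v_2}{v_1}Y_{v_{1}}^{\left( k_{1}\right) }\left( \lambda \right) Y_{n-v_{1}-v_{2}}^{\left( k_{2}\right) }\left( \lambda \right) Y_{v_{2}}^{\left( k_{3}\right) }\left( \lambda \right),
\]
and substitute the explicit formula (\ref{ExplFormulaYk}) into each of the four evaluations of the numbers (the three on the right and the single one on the left).

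Then I would collect the $\lambda$-dependent scalars. Each factor $Y_v^{(k)}(\lambda)$ contributes a sign $(-1)^v$, a power $2^k$, a factorial $v!$, a power $\lambda^{2v}$, and a denominator $(\lambda-1)^{k+v}$. On the right-hand side the three signs multiply to $(-1)^{v_1+(n-v_1-v_2)+v_2}=(-1)^n$, the powers of $2$ give $2^{k_1+k_2+k_3}$, the powers of $\lambda^2$ give $\lambda^{2n}$, and the denominators combine to $(\lambda-1)^{k_1+k_2+k_3+n}$; each of these matches the corresponding factor produced by the left-hand side. The only genuinely combinatorial step is the factorial collapse
\[
\binom{n}{v_2}\binom{n-v_2}{v_1}\,v_1!\,(n-v_1-v_2)!\,v_2!=n!,
\]
which cancels against the single $n!$ on the left. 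After dividing both sides by the common factor $(-1)^n 2^{k_1+k_2+k_3}n!\,\lambda^{2n}/(\lambda-1)^{k_1+k_2+k_3+n}$, only the binomial coefficients survive and one reads off the asserted identity.

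The whole argument is routine; the single point requiring care is the bookkeeping in the previous paragraph, namely confirming that every power of $(-1)$, $2$, $\lambda^{2}$ and $(\lambda-1)$ on the two sides agrees, so that these factors may be divided out to leave the purely combinatorial Vandermonde-type statement. Since this is precisely the $m=3$ case of the computation already performed for Theorem \ref{Theorem-MultiVanderType}, no new difficulty arises, and the detail may be omitted exactly as there.
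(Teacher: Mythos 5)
Your proposal is correct and follows the paper's own route exactly: the paper obtains this corollary by substituting $m=3$ into Theorem \ref{Theorem-MultiVanderType}, whose proof is precisely the combination of the explicit formula (\ref{ExplFormulaYk}) with Theorem \ref{Theorem-Multi} that you spell out. Your added bookkeeping (the sign, $2$-power, $\lambda^{2}$-power, and $(\lambda-1)$-power matching, together with the multinomial collapse $\binom{n}{v_2}\binom{n-v_2}{v_1}v_1!\,(n-v_1-v_2)!\,v_2!=n!$) is exactly the detail the paper omits, mirroring its written-out $m=2$ case.
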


\begin{rem}
	The following well known the Chu-Vandermonde identity given as follows:%
	\[
	\left( 
	\begin{array}{c}
	x+a \\ 
	k%
	\end{array}%
	\right) =\sum_{j=0}^{k}\left( 
	\begin{array}{c}
	x \\ 
	j%
	\end{array}%
	\right) \left( 
	\begin{array}{c}
	a \\ 
	k-j%
	\end{array}%
	\right) 
	\]%
	(\textit{cf}. \cite{Gould}, \cite{Jordan1950}, \cite{SimsekMMAS2015}).
\end{rem}

We give some applications related to the Vandermonde type convolution
formula and also some combinatorial sums as follows:

Substituting $k_{1}=k_{2}=n$ into (\ref{MultiVander2}), and since 
\[
\left( 
\begin{array}{c}
3n-1 \\ 
n%
\end{array}%
\right) =\frac{2}{3}\left( 
\begin{array}{c}
3n \\ 
n%
\end{array}%
\right) , 
\]%
we obtain the following corollary:

\begin{cor}
	Let $n\in \mathbb{N}_0$. Then we have 
	\[
	\left( 
	\begin{array}{c}
	3n \\ 
	n%
	\end{array}%
	\right) =\frac{3}{2}\sum_{j=0}^{n}\left( 
	\begin{array}{c}
	n+j-1 \\ 
	j%
	\end{array}%
	\right) \left( 
	\begin{array}{c}
	2n-j-1 \\ 
	n-j%
	\end{array}%
	\right) . 
	\]%
\end{cor}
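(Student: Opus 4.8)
The plan is to specialize the Vandermonde type convolution formula (\ref{MultiVander2}) and then rewrite the resulting binomial coefficient in the central form $\left(\begin{array}{c} 3n \\ n \end{array}\right)$. All of the combinatorial work has already been done in establishing (\ref{MultiVander2}), which itself rests on the explicit formula (\ref{ExplFormulaYk}); what remains is a substitution together with one elementary factorial identity.

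First I would put $k_{1}=k_{2}=n$ in (\ref{MultiVander2}). The upper entry on the left becomes $k_{1}+k_{2}+n-1=3n-1$, while on the right the two factors specialize to $\left(\begin{array}{c} n+v_{1}-1 \\ v_{1}\end{array}\right)$ and $\left(\begin{array}{c} 2n-v_{1}-1 \\ n-v_{1}\end{array}\right)$. Renaming the summation index $v_{1}$ as $j$ yields
\[
\left(
\begin{array}{c}
3n-1 \\
n
\end{array}
\right)
=\sum_{j=0}^{n}
\left(
\begin{array}{c}
n+j-1 \\
j
\end{array}
\right)
\left(
\begin{array}{c}
2n-j-1 \\
n-j
\end{array}
\right).
\]

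Next I would invoke the elementary identity
\[
\left(
\begin{array}{c}
3n-1 \\
n
\end{array}
\right)
=\frac{2}{3}
\left(
\begin{array}{c}
3n \\
n
\end{array}
\right),
\]
which is recorded just above the statement of the corollary. This is immediate from expressing both coefficients through factorials, after which their quotient collapses to $\frac{(3n-1)!\,(2n)!}{(2n-1)!\,(3n)!}=\frac{2n}{3n}=\frac{2}{3}$. Substituting this evaluation of the left-hand side into the specialized convolution and then multiplying through by $\frac{3}{2}$ isolates $\left(\begin{array}{c} 3n \\ n \end{array}\right)$ and produces exactly the asserted formula.

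I do not expect any genuine obstacle here, since the analytic content is inherited from (\ref{MultiVander2}). The only points requiring care are the bookkeeping of the parameters under the substitution $k_{1}=k_{2}=n$ and the verification of the factorial ratio $\tfrac{2}{3}$, both of which are routine. One should also note that the substitution $k_{1}=k_{2}=n$ tacitly uses $n\in\mathbb{N}$, so that the degenerate case $n=0$ is to be understood separately as the trivial base value.
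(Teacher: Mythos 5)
Your proposal is correct and follows exactly the paper's own route: substitute $k_{1}=k_{2}=n$ into (\ref{MultiVander2}) and then apply the identity $\left(\begin{array}{c} 3n-1 \\ n \end{array}\right)=\frac{2}{3}\left(\begin{array}{c} 3n \\ n \end{array}\right)$, which is precisely what the paper does. Your added remark about the case $n=0$ is in fact a point the paper glosses over (with the convention $\left(\begin{array}{c} -1 \\ 0 \end{array}\right)=1$ the stated identity reads $1=\tfrac{3}{2}$ there), so restricting the substitution to $n\geq 1$ as you do is the more careful reading.
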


Substituting $k_{1}=n+1$ and $k_{2}=n$ into (\ref{MultiVander2}), we obtain the following corollary:

\begin{cor}
	Let $n\in \mathbb{N}_0$. Then we have 
	\begin{equation}
	\left( 
	\begin{array}{c}
	3n \\ 
	n%
	\end{array}%
	\right) =\sum_{j=0}^{n}\left( 
	\begin{array}{c}
	n+j \\ 
	j%
	\end{array}%
	\right) \left( 
	\begin{array}{c}
	2n-j-1 \\ 
	n-j%
	\end{array}%
	\right) .  \label{New-5}
	\end{equation}
\end{cor}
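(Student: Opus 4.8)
The plan is to obtain this identity by a direct specialization of the Vandermonde type convolution formula (\ref{MultiVander2}), exactly as the corollary's setup indicates. No new combinatorial input is required; everything reduces to substituting $k_{1}=n+1$ and $k_{2}=n$ and then simplifying the binomial arguments.

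First I would record the left-hand side. With $k_{1}=n+1$ and $k_{2}=n$, the upper parameter in the left binomial of (\ref{MultiVander2}) becomes
\[
k_{1}+k_{2}+n-1=(n+1)+n+n-1=3n,
\]
so the left side is precisely $\binom{3n}{n}$. Next I would simplify the summand on the right. Writing the summation index $v_{1}$ as $j$, the two factors become
\[
\binom{k_{1}+v_{1}-1}{v_{1}}=\binom{(n+1)+j-1}{j}=\binom{n+j}{j},
\qquad
\binom{k_{2}+n-v_{1}-1}{n-v_{1}}=\binom{n+n-j-1}{n-j}=\binom{2n-j-1}{n-j}.
\]
Substituting these into (\ref{MultiVander2}) yields the claimed identity directly, the range $0\le j\le n$ being inherited unchanged from the summation in (\ref{MultiVander2}).

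Since the statement is a pure specialization of an already established formula, I do not expect any real obstacle. The only point meriting a brief check is the boundary behavior of the second binomial: for small $j$ relative to $n$ the upper index $2n-j-1$ stays nonnegative, and the degenerate case $n=0$ (where the single term involves $\binom{-1}{0}$) is handled by the standard convention $\binom{-1}{0}=1$, so both sides equal $1$. A one-line verification at $n=0$ and $n=1$ confirms the normalization, after which the general case follows immediately from (\ref{MultiVander2}).
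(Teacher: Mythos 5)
Your proposal is correct and matches the paper's own derivation exactly: the paper likewise obtains this corollary by substituting $k_{1}=n+1$ and $k_{2}=n$ into (\ref{MultiVander2}) and simplifying the binomial arguments. Your extra check of the degenerate case $n=0$ (where $k_{2}=0$ falls outside the stated hypothesis $k_{2}\in\mathbb{N}$ of (\ref{MultiVander2}) and one needs $\binom{-1}{0}=1$) is a small but welcome refinement the paper omits.
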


\begin{cor}
	Let $n\in \mathbb{N}_{0}$. Then we have%
	\begin{equation}
	C_{n}=\frac{1}{n+1}\left( 
	\begin{array}{c}
	3n \\ 
	n%
	\end{array}%
	\right) -\frac{1}{n+1}\sum_{j=0}^{n-1}\left( 
	\begin{array}{c}
	n+j \\ 
	j%
	\end{array}%
	\right) \left( 
	\begin{array}{c}
	2n-j-1 \\ 
	n-j%
	\end{array}%
	\right) .  \label{New-6}
	\end{equation}
\end{cor}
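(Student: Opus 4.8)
The plan is to deduce (\ref{New-6}) directly from the preceding Corollary (\ref{New-5}) together with the explicit Catalan formula (\ref{Expl-Catalan}), by peeling off the top term of the convolution sum. First I would recall the identity
\[
\binom{3n}{n}=\sum_{j=0}^{n}\binom{n+j}{j}\binom{2n-j-1}{n-j}
\]
from (\ref{New-5}). The sum there runs up to $j=n$, whereas the sum appearing in (\ref{New-6}) stops at $j=n-1$; hence the difference between the two is precisely the $j=n$ summand, and the whole task reduces to evaluating that single term.

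Next I would compute the $j=n$ term. Substituting $j=n$ gives $\binom{n+j}{j}=\binom{2n}{n}$ and $\binom{2n-j-1}{n-j}=\binom{n-1}{0}=1$, so this summand equals $\binom{2n}{n}$. Splitting the sum in (\ref{New-5}) therefore yields
\[
\binom{3n}{n}=\binom{2n}{n}+\sum_{j=0}^{n-1}\binom{n+j}{j}\binom{2n-j-1}{n-j},
\]
which I would rearrange into
\[
\binom{2n}{n}=\binom{3n}{n}-\sum_{j=0}^{n-1}\binom{n+j}{j}\binom{2n-j-1}{n-j}.
\]
Finally I would divide both sides by $n+1$ and invoke $C_{n}=\frac{1}{n+1}\binom{2n}{n}$ from (\ref{Expl-Catalan}), which reproduces (\ref{New-6}) at once.

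The only points that need any care are the boundary case $n=0$, where the sum in (\ref{New-6}) is empty and one checks $\frac{1}{1}\binom{0}{0}=1=C_{0}$ directly, and the convention $\binom{n-1}{0}=1$ used when evaluating the top term. Beyond this bookkeeping there is no genuine obstacle: the corollary is an immediate consequence of (\ref{New-5}) and the Catalan formula, so the essential step is simply recognizing that extending the summation by one term recovers $\binom{2n}{n}$.
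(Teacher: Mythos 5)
Your proposal is correct and follows essentially the same route as the paper: the paper likewise peels off the $j=n$ term of (\ref{New-5}), which equals $\binom{2n}{n}$, rearranges to get $\binom{3n}{n}-\binom{2n}{n}=\sum_{j=0}^{n-1}\binom{n+j}{j}\binom{2n-j-1}{n-j}$, and then multiplies by $\frac{1}{n+1}$ and applies (\ref{Expl-Catalan}). Your additional check of the $n=0$ boundary case and the convention $\binom{n-1}{0}=1$ is careful bookkeeping the paper leaves implicit.
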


\begin{proof}
	By using (\ref{New-5}), we have%
	\[
	\left( 
	\begin{array}{c}
	3n \\ 
	n%
	\end{array}%
	\right) -\left( 
	\begin{array}{c}
	2n \\ 
	n%
	\end{array}%
	\right) =\sum_{j=0}^{n-1}\left( 
	\begin{array}{c}
	n+j \\ 
	j%
	\end{array}%
	\right) \left( 
	\begin{array}{c}
	2n-j-1 \\ 
	n-j%
	\end{array}%
	\right) .
	\]%
	Multiplying both sides of the above equation by $\frac{1}{n+1}$\ and using (%
	\ref{Expl-Catalan}) yields the desired results.
\end{proof}

In \cite[Corollary 6.6]{SimsekMMAS2015}, Simsek gave the following formulas
for the Catalan numbers:%
\[
C_{n}=\frac{2n+1}{n+1}\sum_{j=n}^{2n}\left( -1\right) ^{n+j}\left( 
\begin{array}{c}
2n \\ 
j%
\end{array}%
\right) \frac{1}{j+1}.
\]%
Combining the above equation with (\ref{New-6}), we arrive at a
combinatorial sum including binomial coefficients given by the following theorem:

\begin{thm}
	Let $n\in \mathbb{N}_0$. Then we have 
	\[
	\left( 
	\begin{array}{c}
	3n \\ 
	n%
	\end{array}%
	\right) =\sum_{j=n}^{2n}\left( -1\right) ^{n+j}\left( 
	\begin{array}{c}
	2n \\ 
	j%
	\end{array}%
	\right) \frac{2n+1}{j+1}+\sum_{j=0}^{n-1}\left( 
	\begin{array}{c}
	n+j \\ 
	j%
	\end{array}%
	\right) \left( 
	\begin{array}{c}
	2n-j-1 \\ 
	n-j%
	\end{array}%
	\right) .
	\]
\end{thm}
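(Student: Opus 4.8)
The plan is to derive the identity by equating the two distinct closed forms for the Catalan number $C_n$ that are already at hand in the excerpt and then clearing the common denominator.

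First I would take the Simsek formula \cite[Corollary 6.6]{SimsekMMAS2015} recalled immediately above the statement and multiply both sides by $(n+1)$; the factor $\frac{2n+1}{n+1}$ then collapses to $2n+1$, so that
\[
(n+1)C_{n} = \sum_{j=n}^{2n}\left( -1\right) ^{n+j}\left(
\begin{array}{c}
2n \\
j
\end{array}
\right) \frac{2n+1}{j+1}.
\]
Next I would multiply the corollary (\ref{New-6}) by $(n+1)$ to obtain
\[
(n+1)C_{n} = \left(
\begin{array}{c}
3n \\
n
\end{array}
\right) -\sum_{j=0}^{n-1}\left(
\begin{array}{c}
n+j \\
j
\end{array}
\right) \left(
\begin{array}{c}
2n-j-1 \\
n-j
\end{array}
\right) .
\]

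Finally, since both right-hand sides equal $(n+1)C_{n}$, I would set them equal and transpose the sum of binomial products to the opposite side, which isolates $\left(
\begin{array}{c}
3n \\
n
\end{array}
\right) $ and produces exactly the asserted identity.

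The main point to watch, rather than a genuine obstacle, is purely clerical: one must verify that the multiplication by $(n+1)$ distributes the factor $2n+1$ correctly under the first summation (giving the coefficient $\frac{2n+1}{j+1}$) and that the index ranges $j=n,\ldots ,2n$ and $j=0,\ldots ,n-1$, together with the sign $\left( -1\right) ^{n+j}$, are carried over verbatim. Conceptually the entire content of the theorem is the observation that $(n+1)C_{n}$ admits these two independent representations — one coming from (\ref{New-6}) and one from the cited alternating-sum formula — so that no further combinatorial work is required beyond this elementary elimination of $C_{n}$.
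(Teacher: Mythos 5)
Your proposal is correct and is exactly the paper's own argument: the paper proves this theorem by combining Simsek's formula $C_{n}=\frac{2n+1}{n+1}\sum_{j=n}^{2n}(-1)^{n+j}\binom{2n}{j}\frac{1}{j+1}$ with the identity (\ref{New-6}) and eliminating $C_{n}$, which is precisely your elimination after multiplying both representations by $(n+1)$. You have merely written out the algebra that the paper leaves implicit, so there is nothing to add or correct.
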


\section{Some applications related to the numbers $Y_{n}^{\left( n+1\right)
	}\left( \protect\lambda \right) $ and their approximation}

Inequalities are used in all multi-disciplinary areas which are
related to mathematics and its applications. Especially, inequalities have
many applications in approximation theory and related areas.

In this section, by using Stirling's approximation for factorials, we give
some approximation value of the special case of the numbers $Y_{n}^{\left(
	k\right) }\left( \lambda \right) $. The results of this section may have
potential usage in areas including inequalities and approximation theory.

Firstly, let us recall an approximate value of the Catalan numbers is given
by the following lemma:

\begin{lem}
	\textup{(\textit{cf}. \cite[p. 110]{KoshyBOOK})}
	\begin{equation}
	C_{n}\approx \frac{2^{2n}}{n\sqrt{n\pi }}.  \label{CApp}
	\end{equation}
\end{lem}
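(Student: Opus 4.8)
The plan is to derive the stated asymptotic directly from the explicit formula (\ref{Expl-Catalan}) for the Catalan numbers together with Stirling's approximation for factorials, namely $m!\approx \sqrt{2\pi m}\left( \frac{m}{e}\right) ^{m}$ as $m\to \infty $. First I would rewrite the binomial coefficient appearing in (\ref{Expl-Catalan}) as a ratio of factorials, so that
\[
C_{n}=\frac{1}{n+1}\cdot \frac{\left( 2n\right) !}{\left( n!\right) ^{2}},
\]
expressing the whole quantity purely in terms of factorials to which Stirling's formula can be applied.

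Next I would substitute Stirling's approximation separately into $\left( 2n\right) !$ and into $\left( n!\right) ^{2}$. The numerator becomes $\sqrt{4\pi n}\left( \frac{2n}{e}\right) ^{2n}=2\sqrt{\pi n}\,\frac{2^{2n}n^{2n}}{e^{2n}}$, and the denominator becomes $2\pi n\left( \frac{n}{e}\right) ^{2n}=2\pi n\,\frac{n^{2n}}{e^{2n}}$. The key simplification is that the factors $n^{2n}$ and $e^{2n}$ cancel between numerator and denominator after writing $\left( 2n\right) ^{2n}=2^{2n}n^{2n}$, which leaves
\[
\frac{\left( 2n\right) !}{\left( n!\right) ^{2}}\approx \frac{2\sqrt{\pi n}\,2^{2n}}{2\pi n}=\frac{2^{2n}}{\sqrt{n\pi }}.
\]
This reduces the whole problem to controlling the remaining prefactor $\frac{1}{n+1}$.

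Finally, I would use $n+1\approx n$ for large $n$ to replace $\frac{1}{n+1}$ by $\frac{1}{n}$ in $C_{n}=\frac{1}{n+1}\cdot \frac{\left( 2n\right) !}{\left( n!\right) ^{2}}$, which yields
\[
C_{n}\approx \frac{1}{n}\cdot \frac{2^{2n}}{\sqrt{n\pi }}=\frac{2^{2n}}{n\sqrt{n\pi }},
\]
the asserted approximation (\ref{CApp}). The main difficulty here is not algebraic but conceptual: one must read $\approx $ as asymptotic equivalence, so that the replacement of $n+1$ by $n$ (which alters the ratio only by a factor tending to $1$) is legitimate at exactly the same level of precision as the Stirling step that precedes it. Once this interpretation is fixed, the remaining work is the routine cancellation of the powers of $n$ and $e$ carried out above.
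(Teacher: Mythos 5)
Your derivation is correct: applying Stirling's formula to $(2n)!$ and $(n!)^2$ gives $\binom{2n}{n}\approx 2^{2n}/\sqrt{n\pi}$, and replacing $n+1$ by $n$ yields the claim. This is essentially the same route the paper intends---it cites the lemma from Koshy's book without proof but explicitly notes that Stirling's approximation (\ref{StirlingApprox}) ``is used in the proof of equation (\ref{CApp})'', which is exactly the argument you carried out.
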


Secondly, let us recall the following Stirling's approximation formula (%
\textit{cf}. \cite{KoshyBOOK}, \cite{SrivatavaChoi}):%
\begin{equation}
n!\approx \left( \frac{n}{e}\right) ^{n}\sqrt{2\pi n},
\label{StirlingApprox}
\end{equation}%
which is used in the proof of equation (\ref{CApp}).

Now, we define the numbers $V_{n}\left( \lambda \right) $ as follows:%
\begin{equation}
V_{n}\left( \lambda \right) :=\frac{Y_{n}^{\left( n+1\right) }\left( \lambda
	\right) }{\left( n+1\right) !}.  \label{NewNumberVn}
\end{equation}%
By substituting $k=n+1$ into (\ref{ExplFormulaYk}), we obtain%
\begin{equation}
Y_{n}^{\left( n+1\right) }\left( \lambda \right) =\left( -1\right)
^{n}\left( 
\begin{array}{c}
2n \\ 
n%
\end{array}%
\right) \frac{2^{n+1}n!\lambda ^{2n}}{\left( \lambda -1\right) ^{2n+1}}.
\label{New-1}
\end{equation}%
Combining (\ref{NewNumberVn}) with (\ref{New-1}) yields a relation between
the Catalan numbers and the numbers $V_{n}\left( \lambda \right) $ given by
the following theorem:

\begin{thm}
	\begin{equation}
	V_{n}\left( \lambda \right) =\left( -1\right) ^{n}C_{n}\frac{2^{n+1}\lambda
		^{2n}}{\left( \lambda -1\right) ^{2n+1}}.  \label{THNew-1}
	\end{equation}
\end{thm}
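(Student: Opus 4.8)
The plan is to treat this as a direct computation: the statement is an immediate consequence of the definition of $V_{n}(\lambda)$ together with the already-derived closed form for $Y_{n}^{(n+1)}(\lambda)$, so no fresh generating-function manipulation is needed. First I would recall the defining relation (\ref{NewNumberVn}), namely $V_{n}(\lambda)=Y_{n}^{(n+1)}(\lambda)/(n+1)!$, and substitute into it the explicit expression (\ref{New-1}) for $Y_{n}^{(n+1)}(\lambda)$, which itself was obtained by specializing $k=n+1$ in the general explicit formula (\ref{ExplFormulaYk}); there the parameters collapse via $\binom{k+n-1}{n}=\binom{2n}{n}$, $2^{k}=2^{n+1}$, and $(\lambda-1)^{k+n}=(\lambda-1)^{2n+1}$.

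After this substitution the right-hand side becomes $(-1)^{n}\binom{2n}{n}\frac{2^{n+1}n!\,\lambda^{2n}}{(n+1)!\,(\lambda-1)^{2n+1}}$. The next step is the only place where any simplification occurs: I would collapse the factorial ratio using $n!/(n+1)!=1/(n+1)$, leaving a factor $\frac{1}{n+1}\binom{2n}{n}$. By the explicit formula (\ref{Expl-Catalan}) for the Catalan numbers this factor is exactly $C_{n}$, while the remaining factor $\frac{2^{n+1}\lambda^{2n}}{(\lambda-1)^{2n+1}}$ is carried through unchanged, which yields the claimed identity (\ref{THNew-1}).

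I do not expect a genuine obstacle, as the argument is pure bookkeeping. The one point requiring a moment of care is to pair the factorial cancellation with the Catalan recognition in the correct order, that is, to read $\binom{2n}{n}/(n+1)$ as the Catalan number $C_{n}$ rather than leaving it as an unreduced binomial expression, since it is precisely that recognition which reveals the whole proof to be a single line.
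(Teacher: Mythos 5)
Your proposal is correct and follows exactly the paper's own route: specialize $k=n+1$ in (\ref{ExplFormulaYk}) to get (\ref{New-1}), divide by $(n+1)!$ as in the definition (\ref{NewNumberVn}), and recognize $\frac{1}{n+1}\binom{2n}{n}=C_{n}$ via (\ref{Expl-Catalan}). Nothing is missing; the argument is indeed the single-line bookkeeping you describe.
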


Applying (\ref{CApp}) and (\ref{StirlingApprox}) to (\ref{THNew-1}) yields
an approximation value of the numbers $V_{n}\left( \lambda \right) $ by the
following theorem:

\begin{thm}
	Let $n$ be sufficiently large. Then we have%
	\begin{equation}
	V_{n}\left( \lambda \right) \approx \frac{2^{3n+1}\left( -\lambda
		^{2}\right) ^{n}}{\left( \lambda -1\right) ^{2n+1}n^{\frac{3}{2}}\sqrt{\pi }}%
	.  \label{New-8}
	\end{equation}
\end{thm}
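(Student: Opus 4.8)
The plan is to substitute the two approximation formulas stated in the preceding lemmas directly into the exact expression for $V_{n}(\lambda)$ given in the previous theorem, equation (\ref{THNew-1}). Starting from
\[
V_{n}\left( \lambda \right) =\left( -1\right) ^{n}C_{n}\frac{2^{n+1}\lambda ^{2n}}{\left( \lambda -1\right) ^{2n+1}},
\]
I would replace the Catalan number $C_{n}$ by its approximate value from equation (\ref{CApp}), namely $C_{n}\approx \frac{2^{2n}}{n\sqrt{n\pi }}$. This is the only genuine input needed, since the rest of the factors in (\ref{THNew-1}) are exact algebraic quantities in $\lambda$ and $n$ that require no approximation.

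First I would carry out the multiplication. Inserting the approximation for $C_{n}$ gives
\[
V_{n}\left( \lambda \right) \approx \left( -1\right) ^{n}\frac{2^{2n}}{n\sqrt{n\pi }}\cdot \frac{2^{n+1}\lambda ^{2n}}{\left( \lambda -1\right) ^{2n+1}}.
\]
Next I would collect the powers of $2$, combining $2^{2n}$ and $2^{n+1}$ into $2^{3n+1}$, and absorb the sign $(-1)^{n}$ into the factor $\lambda^{2n}$ to write $(-1)^{n}\lambda^{2n}=(-\lambda^{2})^{n}$, which matches the numerator in the claimed formula (\ref{New-8}). Finally I would rewrite the denominator factor $n\sqrt{n}=n^{3/2}$, so that $n\sqrt{n\pi}=n^{3/2}\sqrt{\pi}$, yielding exactly
\[
V_{n}\left( \lambda \right) \approx \frac{2^{3n+1}\left( -\lambda ^{2}\right) ^{n}}{\left( \lambda -1\right) ^{2n+1}n^{\frac{3}{2}}\sqrt{\pi }},
\]
as asserted.

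This proof is essentially a bookkeeping exercise, so there is no serious obstacle. The only point requiring a word of justification is the legitimacy of substituting the asymptotic expression for $C_{n}$: since (\ref{THNew-1}) expresses $V_{n}(\lambda)$ as a product of $C_{n}$ with a factor that does not depend on the approximation, the relative error of the resulting estimate for $V_{n}(\lambda)$ is the same as that of Stirling's formula for $C_{n}$, which justifies the hypothesis that $n$ be sufficiently large. The Stirling formula (\ref{StirlingApprox}) enters only indirectly, through its role in deriving (\ref{CApp}) as noted in the text, so I would not need to invoke it separately. I would therefore present the three algebraic simplifications above in sequence and conclude directly.
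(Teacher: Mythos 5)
Your proposal is correct and matches the paper's own argument: the paper likewise obtains \eqref{New-8} by applying the Catalan approximation \eqref{CApp} to the identity \eqref{THNew-1} and simplifying, with \eqref{StirlingApprox} entering only through the derivation of \eqref{CApp}. Your algebra ($2^{2n}\cdot 2^{n+1}=2^{3n+1}$, $(-1)^{n}\lambda^{2n}=(-\lambda^{2})^{n}$, $n\sqrt{n\pi}=n^{3/2}\sqrt{\pi}$) is exactly the computation the paper leaves implicit.
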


As an application of the above theorem, we give the following examples:

When $n=1$ and $\lambda =-1$ in the right hand side of the (\ref{New-8}) is
as follows:%
\[
\frac{2^{3n+1}\left( -\lambda ^{2}\right) ^{n}}{\left( \lambda -1\right)
	^{2n+1}n^{\frac{3}{2}}\sqrt{\pi }}=\frac{2}{\sqrt{\pi }}\approx 1,1283
\]%
whereas $V_{1}\left( -1\right) =0,5$.

When $n=5$ and $\lambda =-1$ in the
right hand side of the (\ref{New-8}) is as follows:%
\[
\frac{2^{3n+1}\left( -\lambda ^{2}\right) ^{n}}{\left( \lambda -1\right)
	^{2n+1}n^{\frac{3}{2}}\sqrt{\pi }}=\frac{32}{5\sqrt{5\pi }}\approx 1,6148
\]%
whereas $V_{5}\left( -1\right) =1,3125$.

When $n=10$ and $\lambda =-1$ in
the right hand side of the (\ref{New-8}) is as follows:%
\[
\frac{2^{3n+1}\left( -\lambda ^{2}\right) ^{n}}{\left( \lambda -1\right)
	^{2n+1}n^{\frac{3}{2}}\sqrt{\pi }}=-\frac{2^{10}}{10\sqrt{10\pi }}\approx
-18,2694
\]%
whereas $V_{10}\left( -1\right) =-\frac{4199}{256}=-16,40234375$.

When $n=125
$ and $\lambda =-1$ in the right hand side of the (\ref{New-8}) is as
follows:%
\[
\frac{2^{3n+1}\left( -\lambda ^{2}\right) ^{n}}{\left( \lambda -1\right)
	^{2n+1}n^{\frac{3}{2}}\sqrt{\pi }}=\frac{2^{125}}{125\sqrt{125\pi }}\approx
1,7171\times 10^{34}
\]%
whereas%
\[
V_{125}\left( -1\right) =\frac{\left( 
	\begin{array}{c}
	250 \\ 
	125%
	\end{array}%
	\right) }{2^{125}126}\approx 1,7018\times 10^{34}.
\]

As a consequence, from (\ref{THNew-1}), we obtain%
\[
\frac{V_{n+1}\left( \lambda \right) }{V_{n}\left( \lambda \right) }=-2\frac{%
	C_{n+1}}{C_{n}}\left( \frac{\lambda }{\lambda -1}\right) ^{2}. 
\]%
Combining the above equation with (\ref{CRecur}), we get the following
recurrence relation for the numbers $V_{n}\left( \lambda \right) $ as
follows:%
\[
\frac{V_{n+1}\left( \lambda \right) }{V_{n}\left( \lambda \right) }=-\frac{%
	8n+4}{n+2}\left( \frac{\lambda }{\lambda -1}\right) ^{2}. 
\]%
When $n$ is sufficiently large, we arrive at the following theorem:

\begin{thm}
	\[
	V_{n+1}\left( \lambda \right) \approx -8\left( \frac{\lambda }{\lambda -1}%
	\right) ^{2}V_{n}\left( \lambda \right) . 
	\]
\end{thm}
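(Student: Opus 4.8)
The plan is to deduce the statement directly from the exact recurrence relation established immediately above, namely
\[
\frac{V_{n+1}\left( \lambda \right) }{V_{n}\left( \lambda \right) }=-\frac{8n+4}{n+2}\left( \frac{\lambda }{\lambda -1}\right) ^{2}.
\]
Since the factor $\left( \frac{\lambda }{\lambda -1}\right) ^{2}$ does not depend on $n$, the entire $n$-dependence of the ratio is carried by the rational coefficient $\frac{8n+4}{n+2}$. First I would rewrite this coefficient, by dividing numerator and denominator by $n$, in the form $\frac{8+4/n}{1+2/n}$, which makes its limiting behaviour transparent.

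Next I would observe that, as $n$ becomes sufficiently large, the terms $4/n$ and $2/n$ become negligible, so that $\frac{8n+4}{n+2}\to 8$. Substituting this limiting value back into the recurrence and multiplying through by $V_{n}\left( \lambda \right) $ then yields the claimed asymptotic relation
\[
V_{n+1}\left( \lambda \right) \approx -8\left( \frac{\lambda }{\lambda -1}\right) ^{2}V_{n}\left( \lambda \right) .
\]

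The argument involves no genuine obstacle: the only points requiring attention are that $\lambda \neq 1$, so that the constant factor $\left( \frac{\lambda }{\lambda -1}\right) ^{2}$ is well defined, and that the symbol $\approx$ is understood in the same large-$n$ asymptotic sense as the Stirling-based estimates employed earlier in this section. The result is thus an immediate consequence of the elementary limit $\lim_{n\to \infty }\frac{8n+4}{n+2}=8$ applied to the exact recurrence.
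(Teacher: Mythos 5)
Your proposal is correct and matches the paper's own reasoning: the paper derives the exact recurrence $\frac{V_{n+1}(\lambda)}{V_{n}(\lambda)}=-\frac{8n+4}{n+2}\left(\frac{\lambda}{\lambda-1}\right)^{2}$ from the Catalan-number ratio $\frac{C_{n}}{C_{n-1}}=\frac{4n-2}{n+1}$ and then passes to large $n$, exactly as you do via $\lim_{n\to\infty}\frac{8n+4}{n+2}=8$. No gap; this is essentially the same argument.
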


\section{Conclusion}

Since the results of this paper include combinatorial sums, some special numbers and polynomials, some special functions, the Chu-Vandermonde type identities, and the Stirling type
approximation formulas for some special numbers, there exists potential
applications of these results to not only subfields of mathematics such as
theory of inequalities and approximation theory, but also to areas of
physics, engineering and the other multi-disciplinary areas.

% ------------------------------------------------------------------------
\subsection*{Acknowledgment}
	The present investigation was supported by the Scientific Research Project Administration of Akdeniz University.

% ------------------------------------------------------------------------

\begin{thebibliography}{1}
\bibitem{apostol} Apostol, T.M.: On the Lerch zeta function. Pacific J. Math. \textbf{1}, 161-167 (1951).

\bibitem{Boyadzhiev} Boyadzhiev, K.N.: Apostol--Bernoulli functions, derivative polynomials and Eulerian polynomials. arXiv:0710.1124v1.

\bibitem{Drajovic} Djordjevic, G.B., Milovanovic G.V.: Special classes of
polynomials. Leskovac: University of Nis, Faculty of Technology (2014).

\bibitem{EgorychevSJM} Egorychev, G.P., Yuzhakov, A.P.: The determination of generating functions and combinatorial sums by multidimensional residues. Sib. Math. J. (Translated from Sibirskii Matematicheskii Zhurnal \textbf{15}(5), 1049-1060 (1973)).

\bibitem{Gould} Gould, H.W.: Inverse series relations and other expansions
involving Humbert polynomials. Duke Math. J. \textbf{32}(4), 697-712 (1965).

\bibitem{GouldVOL4} Gould, H.W.: Combinatorial Identities: Table I:
Intermediate Techniques for Summing Finite Series,   http://www.math.wvu.edu/\symbol{126}gould/Vol.4.PDF

\bibitem{GouldVol.3} Gould, H.W.: Fundamentals of Series: Table III: Basic
Algebraic Techniques, http://www.math.wvu.edu/\symbol{126}gould/Vol.3.PDF

\bibitem{Jordan1950} Jordan, C.: Calculus of Finite Differences. 2nd ed. Chelsea Publishing Company, New York (1950).

\bibitem{DSkim2} Kim, D.S., Kim, T., Seo, J.: A note on Changhee numbers and
polynomials. Adv. Stud. Theor. Phys. \textbf{7}, 993-1003 (2013).

\bibitem{DSkimDaehee} Kim, D.S., Kim, T.: Daehee numbers and polynomials. Appl. Math. Sci. (Ruse) \textbf{7}, 5969-5976 (2013).

\bibitem{KoshyBOOK} Koshy, T.: Catalan numbers with applications. Oxford
University Press, Oxford, New York (2009).

\bibitem{LorentzBernstein} Lorentz G.G.: Bernstein Polynomials. Chelsea
Pub. Comp, New York (1986).

\bibitem{Luo2006} Luo, Q.-M.: Apostol--Euler polynomials of higher order and
Gaussian hypergeometric functions. Taiwanese J. Math. \textbf{10}, 917-925 (2006).

\bibitem{Luo2009} Luo, Q.-M.: The multiplication formulas for the
Apostol--Bernoulli and Apostol--Euler polynomials of higher order. Integral
Trans. Special Func. \textbf{20}, 377-391 (2009).

\bibitem{LuSrivastava} Lu, D.Q., Srivastava, H.M.: Some series identities
involving the generalized Apostol type and related polynomials. Comput. Math. Appl. \textbf{62}, 3591-3602 (2011).

\bibitem{LuoSrivastava2005} Luo, Q.M., Srivastava H.M.: Some
generalizations of the Apostol--Bernoulli and Apostol--Euler polynomials, J. Math. Anal. Appl. \textbf{308}, 290-302 (2005).

\bibitem{Luo} Luo, Q.M., Srivastava, H.M.: Some generalizations of the
Apostol--Genocchi polynomials and the Stirling numbers of the second kind.
Appl. Math. Compute. \textbf{217}, 5702-5728 (2011).

\bibitem{OzdenAMC2014} Ozden, H., Simsek, Y.: Modification and unification of
the Apostol-type numbers and polynomials and their applications. Appl. Math. Compute. \textbf{235}, 338-351 (2014).

\bibitem{SimsekFPTA} Simsek, Y.: Generating functions for generalized Stirling type numbers, array type polynomials, Eulerian type polynomials and their alications. Fixed Point Theory Appl. \textbf{87}, 343-355 (2013).

\bibitem{SimsekMMAS2015} Simsek, Y.: Analysis of the Bernstein basis
functions: an approach to combinatorial sums involving binomial coefficients
and Catalan numbers. Math. Method. Appl. Sci. \textbf{38}(14), 3007-3021 (2015).

\bibitem{SimsekFilomat2016} Simsek, Y.: Combinatorial Identities Associated
with Bernstein Type Basis Functions. Filomat \textbf{30}(7), 1683-1689 (2016).

\bibitem{simsekTJM} Simsek, Y.: Construction some new families of Apostol-type numbers and polynomials via Dirichlet character and $p$-adic integrals. Turkish J. Math. 2017, doi: 10.3906/mat-1703-114.

\bibitem{SrivastavaManocha} Srivastava, H.M., Manocha, H.L.: A Treatise on
Generating Functions. Ellis Horwood Limited Publisher, Chichester (1984).

\bibitem{srivas18} Srivastava, H.M., Kim, T., Simsek, Y.: $q$-Bernoulli numbers and
polynomials associated with multiple $q$-zeta functions and basic $L$%
-series. Russ. J. Math. Phys. \textbf{12}, 241-268 (2005).

\bibitem{SrivatavaChoi} Srivastava, H.M., Choi, J.: Zeta and $q$-zeta functions
and associated series and integrals. Elsevier Science Publishers, Amsterdam, London and New York (2012).

\bibitem{KucukogluJNT2017} Srivastava, H.M., Kucukoglu, I., Simsek, Y.: Partial differential equations for a new family of numbers and polynomials unifying the Apostol-type numbers and the Apostol-type polynomials. J. Number Theory \textbf{181}, 117-146 (2017).

\bibitem{Sury} Sury, B.: Sum of the reciprocals of the binomial coefficients.
European J. Combin. \textbf{14}, 351-353 (1993).
\end{thebibliography}
\end{document}